\theoremstyle{plain}
\newtheorem{thm}{Theorem}[section]
\newtheorem{lem}[thm]{Lemma}
\newtheorem{cor}[thm]{Corollary}
\newtheorem{con}[thm]{Conjecture}
\theoremstyle{definition}
\newtheorem{dfn}[thm]{Definition}
\newtheorem{rem}[thm]{Remark}
\theoremstyle{remark}
\DeclareMathOperator{\mult}{mult}
\DeclareMathOperator{\im}{Im}
\DeclareMathOperator{\Int}{int}
\DeclareMathOperator{\relint}{relint}
\DeclareMathOperator{\Supp}{Supp}
\DeclareMathOperator{\Bs}{Bs}
\DeclareMathOperator{\Div}{Div}
\DeclareMathOperator{\Amp}{Amp}
\DeclareMathOperator{\Nef}{Nef}
\DeclareMathOperator{\NEbar}{\overline{NE}}
\DeclareMathOperator{\ddiv}{div}
\DeclareMathOperator{\codim}{codim}
\DeclareMathOperator{\Proj}{Proj}
\newcommand{\R}{\mathbb{R}}
\newcommand{\kb}{\mathbf{k}}
\newcommand{\Q}{\mathbb{Q}}
\newcommand{\N}{\mathbb{N}}
\newcommand{\LL}{\mathbb{L}}
\newcommand{\Z}{\mathbb{Z}}
\newcommand{\D}{\mathbf{D}}
\newcommand{\C}{\mathbb{C}}
\newcommand{\B}{\mathbf{B}}
\newcommand{\n}{\mathbf{n}}
\newcommand{\OO}{\mathcal{O}}
\newcommand{\eff}{\mathrm{eff}}
\newcommand{\mcal}{\mathcal}
\newcommand{\mfrak}{\mathfrak}
\newcommand{\aord}{o}
\newcommand{\ab}{\mathbf{a}}
\newcommand{\vb}{\mathbf{v}}
\newcommand{\ub}{\mathbf{u}}
\newcommand{\wb}{\mathbf{w}}
\title{New outlook on the Minimal Model Program, II}
\date{\today}
\author{Alessio Corti} \address{Department of Mathematics, Imperial
  College London, 180 Queen's Gate, London SW7 2AZ, UK}
\email{a.corti@imperial.ac.uk}
\author{Vladimir Lazi\'c} \address{Mathematisches Institut, Universit\"at Bayreuth, 95440 Bayreuth, Germany}
\email{vladimir.lazic@uni-bayreuth.de}
\keywords{Birational geometry, Minimal Model Program, finite generation}
\subjclass[2010]{14E30}
\begin{document}

\begin{abstract}
  We prove that the finite generation of adjoint rings proved in
  Cascini and Lazi\'c~\cite{CL10} implies all the foundational results
  of the Minimal Model Program: the Rationality, Cone and Contraction
  theorems, the existence of flips, and termination of flips with
  scaling in the presence of a big boundary.
\end{abstract}

\maketitle
\bibliographystyle{amsalpha}

\tableofcontents

\section{Introduction}
\label{sec:introduction}

In this paper we prove that the following results are simple
consequences of the finite generation of the canonical ring: the
Rationality, Cone and Contraction theorems, the existence of flips,
and termination of flips with scaling in the presence of a big
boundary. We regard these statements as the \emph{foundational
  theorems of the Minimal Model Program}, in the sense that they allow
us to run the Minimal Model Program to the extent that is known from
\cite{BCHM}.

Our results are especially relevant when combined with the recent
proof by Cascini and Lazi\'c \cite{CL10} of the finite generation of
the canonical ring, by means of a self-contained and efficient
argument that depends only on the Kawamata--Viehweg vanishing theorem
and makes no use of the foundational theorems just mentioned, or the
Minimal Model Program.\footnote{This was done first in \cite{Laz09}, surveyed
in \cite{corti11:_finit_lazic}. The paper \cite{CL10} is an evolution
and replacement of \cite{Laz09} using several of the original ideas
and constructions.}

Thus \cite{CL10} and the present paper taken together constitute a
self-contained presentation with complete proofs of all the results
that one needs to run the Minimal Model Program as in \cite{BCHM}. We
believe that this is the most efficient and simplest available
treatment of the material. 

The Minimal Model Program started out with the insight that we should
study curves rather than divisors. In this paper, we almost always
work with divisors, not curves. For example, in our treatment of the
Rationality, Cone and Contraction theorems, we prove a statement
concerning the dual cone of nef divisors, which is equivalent to those
three theorems combined.

By {\em finite generation of the canonical ring\/} we
mean the statement of Theorem~\ref{thm:3} below.  Assuming it, we
prove the Rationality, Cone and Contraction theorem \cite[Theorem~3.5,
Theorem~3.7]{KM98}, the existence of flips \cite[Theorem~A]{HM10}, and
termination of flips with scaling in the presence of a big boundary
\cite[Corollary~1.4.2]{BCHM}.

The fact that finite generation of the canonical ring implies (a
subset of) the foundational theorems is not new: for instance, it is
implicit in \cite{HK00}, and it may have been known earlier. At that
time, it would have been hard to imagine that a direct way to prove
the finite generation of the canonical ring was possible. Indeed, the
Minimal Model Program was set up initially, at least in part, as a way
to attack finite generation. Although we claim no originality when it
comes to the statements, we believe that our proofs are particularly
transparent, simple and quick.

The key tool in this paper is the following statement \cite[Corollary
1.1.9]{BCHM}, proved in \cite{CL10} by a self-contained argument based
on induction on the dimension and vanishing theorems.

\begin{thm}[{\cite[Theorem~A]{CL10}}]
  \label{thm:3}
  Let $X$ be a smooth projective variety and $A$ an ample
  $\mathbb{Q}$-divisor on $X$.  Let $\Delta_i$ be $\Q$-divisors on $X$
  such that $\lfloor \Delta_i\rfloor=0$ for $i=1,\dots,r$, and such
  that the support of $\sum\Delta_i$ has simple normal crossings.
  Then the adjoint ring
$$
R(X; K_X+\Delta_1+A, \dots, K_X+\Delta_r+A)
$$
is finitely generated.
\end{thm}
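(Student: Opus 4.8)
The plan is to prove the theorem by induction on $n=\dim X$, the case $n\le 1$ being elementary (divisorial rings on curves are finitely generated). Before the main argument I would dispose of routine reductions. Finite generation of $R(X;D_1,\dots,D_r)$, with $D_i:=K_X+\Delta_i+A$, depends only on the monoid generated by the $D_i$, is unchanged under passing to a truncation and under rescaling the generators, and --- since finite generation of a divisorial ring on a rational polyhedral cone descends to each of its faces --- it is implied by finite generation on the whole ``adjoint cone'' spanned by any finite enlargement of $\{D_1,\dots,D_r\}$ inside the set of divisors $K_X+\Delta+A$ with $\Delta$ a boundary supported on a fixed snc divisor. So I may replace the given divisors by any convenient finite subset of this cone, and I will work with the divisorial ring $\mathfrak R$ of a rational polyhedral cone $\mathcal C\subseteq\Div_\R(X)$ of adjoint divisors, which is better suited to the convex geometry below.

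The engine is restriction to a smooth prime divisor combined with adjunction. Using that $A$ is ample, write $A\sim_\Q A'+A''$ with $A',A''$ ample $\Q$-divisors, take a general smooth member $S\in|mA''|$ with $m\gg 0$ so that $(X,S+\sum\Delta_i)$ is log smooth and $S\not\subseteq\Supp\sum\Delta_i$, and then --- by a perturbation comparing $\mathfrak R$ with its twist by $S$ (the standard ``plt trick'', in which the leftover ample $A'$ is used) --- reduce to proving that the \emph{restricted algebra} $\res_S\mathfrak R':=\Image\bigl(\mathfrak R'\to\bigoplus_m H^0(S,\,\cdot\,)\bigr)$ is finitely generated, where $\mathfrak R'$ is the divisorial ring of an adjoint cone $\mathcal C'$ on $X$ all of whose boundaries contain $S$ with coefficient one. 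Since the pair is log smooth, adjunction gives $(K_X+S+\Delta+A')|_S=K_S+\Delta|_S+A'|_S$ with vanishing different, so $\res_S\mathfrak R'$ is a subalgebra of a genuine adjoint ring on $S$, a variety of one dimension less.

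The main obstacle will be showing that $\res_S\mathfrak R'$ is finitely generated, and this is where the extension theorems and the inductive hypothesis enter, glued by convex geometry. The Hacon--McKernan-type extension theorem for the plt pair $(X,S+\Delta)$ with ample summand $A'$ identifies, in each degree $k$, the image of $H^0\bigl(X,k(K_X+S+\Delta+A')\bigr)\to H^0(S,\,\cdot\,)$ with $H^0\bigl(S,k(K_S+(\Delta|_S-\Theta_k)+A'|_S)\bigr)$, where $\Theta_k\ge 0$ is an explicit truncated asymptotic-base-locus term, supported on $\Supp\Delta|_S$ and bounded above by $\Delta|_S$. Hence, up to truncation, $\res_S\mathfrak R'$ is the adjoint ring on $S$ with boundaries $K_S+(\Delta|_S-\Theta)+A'|_S$ for the limiting corrections $\Theta$ --- again boundaries with $\lfloor\,\cdot\,\rfloor=0$ and an ample summand. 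The crux is to prove that these corrections stabilise and are governed by finitely many linear pieces as the adjoint divisor varies over $\mathcal C'$: this calls for a Diophantine-approximation argument together with the facts that the relevant diminished base loci are locally constant, and the corresponding negative parts linear, on a rational polyhedral subdivision (Nakayama's $\sigma$-decomposition), and it is exactly here that lower-dimensional finite generation is used to pin down each piece. Granting this, the inductive hypothesis in dimension $n-1$ applies piece by piece and gives finite generation of $\res_S\mathfrak R'$.

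Finally I would lift the conclusion from $S$ back to $X$. Finite generation of $\res_S\mathfrak R'$, together with surjectivity of the restriction maps in high degree supplied by the extension theorem, forces the $\sigma$-decomposition (a Zariski-type decomposition) of the divisors in $\mathcal C'$, computed on the fixed log smooth model, to be rational and piecewise linear with semiample positive part on each piece --- which is equivalent to finite generation of $\mathfrak R'$, hence of $\mathfrak R$. Unwinding the reductions of the first paragraph then yields finite generation of the original ring $R(X;K_X+\Delta_1+A,\dots,K_X+\Delta_r+A)$ and completes the induction.
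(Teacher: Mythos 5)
This theorem is not proved in the present paper at all: it is quoted verbatim from \cite{Laz09} and \cite{CL10} as the black-box input from which the rest of the paper derives Mori theory. The introduction states explicitly that those references prove it ``by a self-contained argument based on induction on the dimension and extension theorems.'' So there is no in-paper proof to compare your proposal against.

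Judged on its own terms as a reconstruction of the argument in \cite{Laz09,CL10}, your sketch captures the right skeleton: induction on $\dim X$; restriction to a general ample divisor $S$ and the plt trick to produce a restricted algebra; a Hacon--McKernan-type extension/lifting theorem identifying that restricted algebra (up to truncation) with an adjoint ring on $S$ with correction terms $\Theta$; and convex geometry plus Diophantine approximation to control those corrections. But everything hard is deferred. In particular: (i) establishing that the $\Theta$-corrections are rational and piecewise-linear on a rational polyhedral subdivision of the cone is the technical heart of the proof and is stated, not argued; (ii) the induction in \cite{CL10} does not simply reduce dimension by one inside one statement --- several statements (finite generation, non-vanishing, and a structure result on the ``nef part'') are proved simultaneously, and the logical loop closes only because they feed into one another, which your outline does not reflect; and (iii) the final step --- lifting finite generation of the restricted algebra back to the full ring --- needs the kernel-of-restriction lemma and a careful matching of which lifting statement is actually available in each degree, which your appeal to ``forcing a rational $\sigma$-decomposition'' glosses over. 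As an outline it points in the right direction; as a proof it has its main steps missing.
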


The definition of adjoint ring is given in Section~\ref{sec:notation-conventions}.

A majority of experts would agree that the most important open
problems left in the field are the unconditional termination of flips
with scaling (i.e.\ termination with scaling without assuming that the
boundary is big), and the Abundance conjecture. We prove in
Theorems~\ref{thm:scaling_unconditional} and~\ref{thm:1} that both of
these are consequences of Conjecture \ref{con:fg}, which is a version
of finite generation of the canonical ring that is more general than
Theorem~\ref{thm:3}. We note that it is easy to prove that,
conversely, unconditional termination with scaling and the Abundance
conjecture imply Conjecture \ref{con:fg}, see \cite[Lemma 2.6]{HX11}.

\begin{con}\label{con:fg}
  Let $(X,\Delta)$ be a projective klt pair such that $K_X+\Delta$ is
  pseudoeffective, and let $A$ be an ample $\Q$-divisor on $X$. Then
  the adjoint ring
  \[
  R(X;K_X+\Delta,K_X+\Delta+A)
  \]
  is finitely generated.
\end{con}

We hope that this stronger version of finite generation can be
attacked directly, perhaps using some of the ideas and constructions
from \cite{CL10}. We remark that it is {\em a priori\/} a weaker
version of the following well-known conjecture.  

\begin{con}
  \label{con:FGfull}
  Let $X$ be a normal projective variety, and let $\Delta_i$ be $\Q$-divisors
  on $X$ such that the pairs $(X,\Delta_i)$ are klt for $i=1,\dots,r$. Then the adjoint ring
\[
R(X; K_X+\Delta_1, \dots, K_X+\Delta_r)
\]
  is finitely generated.
\end{con}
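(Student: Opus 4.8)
The plan is to bootstrap Conjecture~\ref{con:FGfull} from Theorem~\ref{thm:3} in three steps; as explained at the end, the last step needs the log abundance conjecture, so the argument is at present complete only in special cases.
\emph{Step~1 (reduction to the SNC case).} Take a simultaneous log resolution $f\colon Y\to X$ of the pairs $(X,\Delta_i)$ and write $f^{*}(K_X+\Delta_i)=K_Y+\Gamma_i-E_i$, where $\Gamma_i$ is the strict transform of $\Delta_i$ together with the $f$-exceptional prime divisors of negative discrepancy over $(X,\Delta_i)$ --- so $\lfloor\Gamma_i\rfloor=0$ and $\sum\Gamma_i$ is SNC, since the pairs are klt --- and $E_i\ge 0$ is $f$-exceptional. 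Pushing forward sections identifies, after passing to a divisible Veronese, the adjoint ring of the $K_X+\Delta_i$ with that of the $K_Y+\Gamma_i$. So we may assume $X$ smooth, $\lfloor\Delta_i\rfloor=0$ and $\sum\Delta_i$ SNC; the only obstruction to quoting Theorem~\ref{thm:3} is that the $K_X+\Delta_i$ need not be big, i.e.\ need not carry an ample summand.

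\emph{Step~2 (reduction to a single divisor).} The gluing calculus underlying Theorem~\ref{thm:3} (cf.~\cite{CL10}) shows that $R(X;K_X+\Delta_1,\dots,K_X+\Delta_r)$ is finitely generated once $R(X,K_X+\Delta)$ is finitely generated for every rational $\Delta$ in the polytope $\mathcal{P}$ spanned by the $\Delta_i$ with $(X,\Delta)$ klt, \emph{and} the divisorial Zariski decomposition of $K_X+\Delta$ varies rationally and piecewise-linearly over $\mathcal{P}$ with base-point-free positive parts on each piece. The Cone, Contraction and flip theorems proved in the body of this paper, together with finiteness of models, are the tools one would use to produce such a chamber structure (the non-big directions requiring the same abundance input flagged below); so it is enough to treat a single klt pair $(X,\Delta)$.

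\emph{Step~3 (the Iitaka fibration).} If $K_X+\Delta$ is not effective the ring is trivial; if it is big, Kodaira's lemma rewrites it, up to $\Q$-linear equivalence and a Veronese, as $K_X+\Delta'+A$ with $\Delta'$ a klt boundary and $A$ ample, so Theorem~\ref{thm:3} applies. Assume then $0\le\kappa(K_X+\Delta)<\dim X$. A general fibre $(F,\Delta|_F)$ of the Iitaka fibration of $K_X+\Delta$ has $\kappa=0$, hence (it is known) a good minimal model with $K_F+\Delta|_F\sim_{\Q}0$; passing to a suitable model and applying the canonical bundle formula of Fujino--Mori and Ambro over the base $Z$ (with $\dim Z<\dim X$), one writes $K_X+\Delta\sim_{\Q}$ the pullback of $K_Z+\Delta_Z+M_Z$, where $(Z,\Delta_Z)$ is klt of log general type and $M_Z$ is the nef moduli part, so that $R(X,K_X+\Delta)$ is finitely generated once $R(Z,K_Z+\Delta_Z+M_Z)$ is. If $M_Z$ were semiample we could replace it by a general effective $N\sim_{\Q}M_Z$ with $(Z,\Delta_Z+N)$ klt of log general type, and then, resolving $Z$ and applying Step~1, finite generation of $R(Z,K_Z+\Delta_Z+N)$ would follow from Theorem~\ref{thm:3}, completing the argument.

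\emph{The main obstacle.} The one missing input is the $\mathbf{b}$-semiampleness of the moduli part $M_Z$ --- the conjecture of Prokhorov--Shokurov, which would follow from the log abundance conjecture --- and it is open in general; the same gap also obstructs the chamber structure invoked in Step~2. So Conjecture~\ref{con:FGfull} appears to require essentially the full strength of log abundance. The more direct route --- run the $(K_X+\Delta)$-MMP to a minimal model and quote abundance for semiampleness --- meets the same wall, and additionally requires termination for a pseudo-effective non-big boundary, itself entangled with abundance. What the strategy above does give unconditionally is Conjecture~\ref{con:FGfull} for $\dim X\le 3$ (log abundance holds there), for $\Delta=0$ (the canonical ring of a smooth projective variety is finitely generated, by Birkar--Cascini--Hacon--McKernan and Fujino--Mori), and, trivially, when the $K_X+\Delta_i$ are big (Theorem~\ref{thm:3}).
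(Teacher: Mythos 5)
The statement you were asked to prove is, in the paper, a \emph{conjecture} (Conjecture~\ref{con:FGfull}), and the paper offers no proof: the authors write only that they ``hope a direct proof of this conjecture will be possible in the short term.'' So there is no argument in the paper against which to compare yours, and the correct answer to the task was to notice exactly this --- which you did, by declining to claim a proof and instead analysing where the known machinery stops short.

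Your diagnosis of the obstructions is essentially right. Step~1 (log resolution and pushforward, using that $\lfloor \Gamma_i\rfloor = 0$ and the $E_i$ are effective and exceptional) is correct and is exactly the manoeuvre in the proof of Theorem~\ref{thmA}(1); the isomorphism of rings does not even need a Veronese beyond what clears the $\Q$-coefficients, though invoking Lemma~\ref{lem:2} is the clean way to say it. Step~3 correctly identifies the canonical bundle formula for the Iitaka fibration, and the $\mathbf{b}$-semiampleness of the moduli part (Prokhorov--Shokurov), as the genuine wall in the non-big, intermediate-Kodaira-dimension case; this is indeed what separates Conjecture~\ref{con:FGfull} from Theorem~\ref{thm:3}, and the paper's own Conjecture~\ref{con:fg} and Theorem~\ref{thm:1} (Conjecture~\ref{con:fg} $\Rightarrow$ abundance) show the authors see the same entanglement with abundance. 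Step~2 is the weakest link as you present it: ``finite generation of each $R(X,K_X+\Delta)$ plus a rational piecewise-linear Zariski chamber structure with base-point-free positive parts'' is not by itself a cited reduction, and producing that chamber structure in the pseudo-effective-but-not-big region is not easier than the multigraded finite generation itself --- in the papers \cite{Laz09,CL10} the chamber structure and the multigraded finite generation are proved \emph{together} by induction, not one from the other. You flag this honestly, which is the right thing to do. The unconditional special cases you list at the end (dimension $\le 3$, $\Delta=0$, and big boundaries) are correct, though the $\Delta=0$ case of the multigraded statement is degenerate since all the $D_i$ coincide.

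In short: no gap to point out beyond the ones you already name, because there is no claimed proof, either in the paper or in your write-up; your assessment of why the conjecture is hard is accurate, modulo the imprecision in Step~2 noted above.
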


It is a pleasure to acknowledge our intellectual debt to
V.~V.~Shokurov and Y.-T.~Siu. Shokurov wrote two fundamental papers on
the existence of flips \cite{Sho92, Sho03}, in which he reduced the
problem of construction of flips for a general klt pair to that of
\emph{pl flips}, and then constructed \mbox{3-fold} pl flips by
proving that a certain ring on a smooth surface is finitely
generated. Both of these key steps are present ``in spirit'' in
\cite{CL10}. Using their lifting lemma---which goes back
to~\cite{Siu98}---Hacon and M\textsuperscript{c}Kernan later realised
that Shokurov's ring is an adjoint ring. The work of Shokurov and Siu
provided the impetus for us to turn head over heels the traditional
perspective where one was supposed to establish the Minimal Model
Program first, and then prove the finite generation of the canonical
ring as a consequence.  \vspace{5mm}

\paragraph{\bf Acknowledgements}
We would like to thank P.~Cascini, F.~Catanese, S.~Coughlan, O.~Fujino,
A.-S.~Kaloghiros, K.~Matsuki and M.~Musta{\c{t}}{\u{a}} for many useful
conversations and comments. We are grateful to the referee for many useful suggestions.

\section{Notation and conventions}
\label{sec:notation-conventions}

  In this paper all algebraic varieties and schemes are defined over
  $\C$. We use $\Q_+$ and $\R_+$ to denote the sets of nonnegative
  rational and real numbers.

  Let $X$ be a normal projective variety and $\kb\in \{\Z,\Q,\R\}$. We
  denote by $\Div_\kb(X)$ the group of
  $\kb$-Cartier divisors on $X$, and by $N^1(X)_\kb$ and
  $N_1(X)_\kb$ the groups of $\kb$-Cartier divisors and $1$-cycles on
  $X$ with coefficients in $\kb$ modulo numerical equivalence. The ample and
  nef cones in $N^1(X)_\R$ are denoted by $\Amp (X)$ and $\Nef (X)$.
  Further, $\NEbar(X)$ denotes the closed cone of curves in $N_1(X)_\R$.

  Many arguments in this paper take place in $\Div_\R (X)$ or a finite
  dimensional subspace thereof not up to equivalence---it is crucial
  for us to distinguish this space from $N^1 (X)_\R$.

  We say that a $\kb$-divisor $D$ is \emph{$\kb$-effective} if there
  exists a divisor $D'\geq0$ such that $D\sim_\kb D'$. If $D$ is
  $\kb$-effective, then it is pseudoeffective. We denote by
  $\Div^{\eff}_\kb(X)$ the set of $\kb$-effective divisors in
  $\Div_\kb (X)$.  The {\em stable base locus} of a $\Q$-effective
  $\Q$-divisor $D$ is $\B(D)=\bigcap\Bs|mD|$, where the intersection
  is over all integers $m>0$ that are sufficiently divisible.

  A {\em pair} $(X,\Delta)$ consists of a normal projective variety
  $X$ and a $\Q$-divisor $\Delta\geq0$ on $X$ such that $K_X+\Delta$
  is $\Q$-Cartier. When $(X,\Delta)$ is a pair, $K_X+\Delta$ is called
  an {\em adjoint divisor}. In particular, this includes the canonical
  class $K_X$ when $\Delta=0$.

  If $X$ is a normal projective variety with field of fractions
  $k(X)$, and $D$ an $\R$-divisor on $X$, then $\OO_X(D)\subset k(X)$
  is the sheaf given by
\[
\OO_X(D)(U)=\bigl\{f\in k(X)\mid \ddiv_U f+D_{|U} \geq 0  \bigr\}
\]
  for every Zariski open set $U\subseteq X$.  We denote by $H^0(X,D)$ the
  group of global sections of this sheaf.

  In this note we only use divisorial rings of the form
\[
R=R(X;D_1, \dots, D_r)=\bigoplus_{(n_1,\dots, n_r)\in \N^r} H^0(X,
n_1D_1+\dots + n_rD_r),
\]
where $D_1,\dots, D_r$ are $\Q$-Cartier $\Q$-divisors on $X$ (not
necessarily $\Q$-effective). If all $D_i$ are adjoint divisors, then
$R(X;D_1, \dots, D_r)$ is an {\em adjoint ring}. The {\em support\/}
of $R$ is the cone
\[
\Supp R=\bigg( \sum_{i=1}^r\R_+D_i\bigg) \cap\Div^{\eff}_\R(X)
\subset \Div_\R(X).
\]
The choice of divisors $D_1, \dots, D_r$ gives the {\em
  tautological\/} linear map
\[
\textstyle \D \colon \R^r \ni (\lambda_1, \dots, \lambda_r)\mapsto
\sum \lambda_iD_i \in \Div_\R(X).
\]
  If $\LL\subseteq\Z^r$ is a finite index subgroup, then a ring of the form
\[
R(X;\D_{|\N^r \cap \LL})=\bigoplus_{\n\in \N^r \cap \LL}
H^0\bigl(X,\D(\n)\bigr)
\]
is called a \emph{Veronese subring of finite index\/} of $R(X;D_1,
\dots, D_r)$. 

A {\em geometric valuation\/} $\Gamma$ on a normal variety $X$ is a
valuation on $k(X)$ given by the order of vanishing at the generic
point of a prime divisor on some birational model $f\colon
Y\to X$. If $D$ is an $\R$-Cartier divisor on $X$, by abusing
notation we use $\mult_\Gamma D$ to denote $\mult_\Gamma f^*D$.

If $\vb, \wb$ are vectors in a real vector space, we denote by
\[
[\vb, \wb]=\bigl\{t\wb+(1-t)\vb \mid t \in [0,1]\bigr\}
\]
  the closed line segment between $\vb$ and $\wb$. We similarly write
  $(\vb,\wb)$ and $(\vb, \wb]$ for the open and half-open segments.

\section{Simple consequences of finite generation}
\label{sec:two-coroll-finite}

The following useful lemma is well known.

\begin{lem}\label{lem:2}
  Let $X$ be a normal projective variety and let $D_1,\dots,D_r$ be
  $\Q$-Cartier $\Q$-divisors on $X$. The ring $R=R(X;D_1,\dots,D_r)$
  is finitely generated if and only if any of its Veronese subrings of
  finite index is finitely generated. In particular, if $D_i\sim_\Q
  D_i'$ and if $R$ is finitely generated, then the ring
  $R'=R(X;D_1',\dots,D_r')$ is finitely generated.
\end{lem}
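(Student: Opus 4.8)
The plan is to prove both directions by the standard truncation/normalization arguments for multigraded rings. For the forward direction, suppose $R = R(X;D_1,\dots,D_r)$ is finitely generated and let $\LL \subset \Z^r$ be a finite index subgroup, with $R' = R(X;\D_{|\N^r \cap \LL})$ the corresponding Veronese subring. First I would choose a finite set of homogeneous generators $f_1,\dots,f_N$ of $R$, with $\deg f_j = \m_j \in \N^r$. Let $d$ be the index of $\LL$ in $\Z^r$; then $d\m_j \in \LL$ for every $j$, since $d\Z^r \subseteq \LL$. The key observation is that every monomial $f_1^{a_1}\cdots f_N^{a_N}$ whose total degree lies in $\LL$ can be rewritten, after grouping the $f_j$'s into blocks of size divisible by suitable integers, as a product of elements of $R'$ of bounded degree; more precisely, $R'$ is generated by the finitely many homogeneous elements of $R'$ whose degree is a nonnegative integer combination $\sum c_j \m_j$ with each $0 \le c_j < Nd$ (say). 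This is the usual ``bounded remainder'' argument: given any element of $R'$ of degree $\n \in \LL$, write it in terms of the $f_j$; each monomial has degree $\sum a_j \m_j = \n$, and we can peel off chunks of the form $d\m_j$ (which lie in $R'$) until each exponent is reduced below $Nd$. Hence $R'$ is finitely generated.

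For the reverse direction, suppose some Veronese subring $R'$ of finite index is finitely generated. Since $R'$ for the subgroup $d\Z^r$ is itself a Veronese subring of finite index of $R'$ for any $\LL \supseteq d\Z^r$, and finite generation of $R'$ passes to this smaller Veronese (by the forward direction already proved, applied to $R'$ in place of $R$), I may assume $\LL = d\Z^r$ for some $d \ge 1$. Now $R$ is a module over $R'$, and I claim it is finitely generated as such: $R = \bigoplus_{\n \in \N^r} H^0(X,\D(\n))$, and each $\n$ decomposes as $\n = d\n' + \s$ with $\s$ in the finite set $\{0,1,\dots,d-1\}^r$, so $R = \sum_{\s} R' \cdot H^0(X,\D(\s))$. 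This exhibits $R$ as a module over the Noetherian ring $R'$ generated by the finitely many graded pieces $H^0(X,\D(\s))$ — but here I must be slightly careful, since $H^0(X,\D(\s))$ could fail to be finite-dimensional if $\D(\s)$ is not $\Q$-effective. In fact $H^0(X,\D(\s))$ is always finite-dimensional (it is the space of sections of a coherent sheaf on a projective variety), so this is fine. Then $R$ is a Noetherian ring, being module-finite over the Noetherian ring $R'$, hence finitely generated as a $\C$-algebra.

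Finally, for the ``in particular'' statement: if $D_i \sim_\Q D_i'$, choose a positive integer $m$ with $mD_i \sim mD_i'$ (an honest linear equivalence of Cartier divisors) for all $i$. Then the Veronese subring of $R(X;D_1,\dots,D_r)$ corresponding to the subgroup $\LL = m\Z^r$ is isomorphic to the corresponding Veronese subring of $R' = R(X;D_1',\dots,D_r')$, since $H^0(X, \sum m n_i D_i) = H^0(X, \sum m n_i D_i')$ under the isomorphism of sheaves $\OO_X(\sum m n_i D_i) \cong \OO_X(\sum m n_i D_i')$ induced by the linear equivalences. Finite generation of $R$ gives finite generation of its Veronese (forward direction), hence of the isomorphic Veronese of $R'$, hence of $R'$ itself (reverse direction).

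I expect the main technical point — the only place requiring genuine care rather than bookkeeping — to be the bounded-remainder argument in the forward direction, making precise that the exponents can be uniformly truncated so that only finitely many generators of $R'$ are needed; everything else is a routine application of the fact that a ring module-finite over a Noetherian ring is Noetherian, together with Hilbert's basis theorem.
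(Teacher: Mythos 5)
Your forward direction is correct but takes a different route from the paper: you use a bounded-remainder truncation argument on the exponents, whereas the paper realizes $R_\LL$ as the ring of invariants for the natural action of the finite abelian group $\Z^r/\LL$ on $R$ (an element $\bar{g}\in\Z^r/\LL$ acts on the graded piece of degree $\n$ by the corresponding character value) and then invokes Noether's theorem that the ring of invariants of a finite group acting on a finitely generated $\C$-algebra is again finitely generated. Your truncation argument is more elementary and self-contained; the paper's is shorter once one is willing to quote classical invariant theory. The ``in particular'' statement is handled the same way in both.

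The reverse direction, however, contains a genuine gap. You write $R = \sum_{\s} R' \cdot H^0(X,\D(\s))$ with $\s$ ranging over $\{0,\dots,d-1\}^r$, and deduce that $R$ is module-finite over $R'$. But this decomposition implicitly requires the multiplication maps
\[
H^0\bigl(X,\D(d\n')\bigr)\otimes H^0\bigl(X,\D(\s)\bigr)\longrightarrow H^0\bigl(X,\D(d\n'+\s)\bigr)
\]
to be surjective, which fails in general. Concretely, take $r=1$, $X$ a smooth curve of genus $2$, $D_1=K_X$, and $d=2$. Then for $\s=1$ your claim reads $H^0(2K_X)\cdot H^0(K_X)=H^0(3K_X)$. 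Writing $\pi\colon X\to\PP^1$ for the hyperelliptic double cover, one has $K_X=\pi^*\OO_{\PP^1}(1)$ and $H^0(3K_X)$ has dimension $5$, with basis the pullbacks of $H^0(\OO_{\PP^1}(3))$ together with the ``odd'' section $y$ satisfying $y^2\in H^0(6K_X)$; but $H^0(2K_X)\cdot H^0(K_X)$ lies entirely in the $4$-dimensional pullback of $H^0(\OO_{\PP^1}(3))$ and misses $y$. So $R_3\neq R_2\cdot R_1$, the decomposition is false, and your module-finiteness claim does not follow. The paper instead observes that every homogeneous $f\in R$ satisfies $f^d\in R_\LL$, so $R$ is integral over $R_\LL$; one then checks that $\mathrm{Frac}(R)$ is a finite extension of $\mathrm{Frac}(R_\LL)$ and invokes Emmy Noether's theorem on finiteness of integral closure to conclude that $R$ is a finite $R_\LL$-module, hence finitely generated over $\C$. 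You should replace your module decomposition with this integrality argument (or some equivalent).
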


\begin{proof}
  Let $\LL\subseteq\Z^r$ be a subgroup of index $d$ giving the
  Veronese subring $R_\LL$ of $R$. Then for any $f\in R$ we have
  $f^d\in R_\LL$, so $R$ is an integral extension of $R_\LL$.
  Furthermore, one can write an action of the group $\Z^r/\LL$ on $R$
  such that $R_\LL$ is the ring of invariants. Now the first claim
  follows from theorems of Emmy Noether on finiteness of integral
  closure and of ring of invariants, and the second claim follows by
  noting that $R$ and $R'$ have isomorphic Veronese subrings of finite
  index.
\end{proof}

We use the following small variation of Theorem~\ref{thm:3}:

\begin{thm}
\label{thmA}
  Let $X$ be a normal projective variety, and let $\Delta_i$ be $\Q$-divisors
  on $X$ such that each pair $(X,\Delta_i)$ is klt for $i=1,\dots,r$.
  \begin{enumerate}
\item If $A$ is an ample $\Q$-divisor on $X$, then the adjoint ring
\[
R(X; K_X+\Delta_1+A, \dots, K_X+\Delta_r+A)
\]
  is finitely generated.
\item If $\Delta_i$ are big, then the adjoint ring
\[
R(X;K_X+\Delta_1,\dots,K_X+\Delta_r)
\]
is finitely generated.
\end{enumerate}
\end{thm}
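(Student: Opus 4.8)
The plan is to reduce Theorem~\ref{thmA} to Theorem~\ref{thm:3} by passing to a log resolution and tracking how adjoint divisors transform. Since by Lemma~\ref{lem:2} finite generation is insensitive to passing to Veronese subrings of finite index and to $\Q$-linear equivalence of the generators, we are free to clear denominators and to work on any convenient birational model.

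For part (1): choose a common log resolution $f\colon Y\to X$ of all the pairs $(X,\Delta_i)$, so that the union of $\Exc(f)$ and the strict transforms of all the $\Delta_i$ has simple normal crossings. Write $f^*(K_X+\Delta_i)=K_Y+\Gamma_i$, where $\Gamma_i=\sum a_{ij}\Gamma_j$ has $a_{ij}<1$ by the klt hypothesis (here I allow some $a_{ij}<0$). Decompose $\Gamma_i=\Delta_i'-E_i$ with $\Delta_i'\geq 0$ the "effective part" (coefficients $\max(a_{ij},0)$, hence $\lfloor\Delta_i'\rfloor=0$) and $E_i\geq 0$ an $f$-exceptional integral divisor. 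By the projection formula $H^0(X,m(K_X+\Delta_i+A))=H^0(Y,m f^*(K_X+\Delta_i+A))$ for $m$ sufficiently divisible, and $mf^*A$ can be replaced by $m(f^*A-\varepsilon\sum\Gamma_j)+m\varepsilon\sum\Gamma_j$; shrinking $\varepsilon$ and perturbing, we arrange an ample $\Q$-divisor $A'$ on $Y$ with $f^*A\sim_\Q A'+(\text{small snc boundary piece})$. The cleanest route is: since $f^*A$ is nef and big, write $f^*A\sim_\Q A''+F$ with $A''$ ample and $F\geq 0$ supported on the snc divisor and with arbitrarily small coefficients; absorbing $F$ into the boundary keeps all coefficients $<1$. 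One then checks $H^0\bigl(X,\sum n_i m(K_X+\Delta_i+A)\bigr)\cong H^0\bigl(Y,\sum n_i m(K_Y+\Delta_i'+F+A'')\bigr)$ because the exceptional twist $\sum n_i m E_i$ contributes nothing to global sections on $Y$ (pushforward kills it). Hence $R(X;\dots)$ has a Veronese subring isomorphic to a Veronese subring of $R(Y;K_Y+\Gamma_1'+A'',\dots,K_Y+\Gamma_r'+A'')$ with $A''$ ample and $\lfloor\Gamma_i'\rfloor=0$ and snc support, and Theorem~\ref{thm:3} applies.

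For part (2): when each $\Delta_i$ is big, Kodaira's lemma lets us write $\Delta_i\sim_\Q A_i+B_i$ with $A_i$ ample $\Q$-divisor and $B_i\geq 0$. The subtlety is that we need a \emph{single} ample divisor, or at least to reduce to the snc big-boundary form of Theorem~\ref{thm:3}. After passing to a common log resolution as above we may assume $X$ smooth and all relevant divisors snc; then choose a small $\varepsilon>0$ and set $A:=\varepsilon\sum A_i$ (ample) and replace $\Delta_i$ by $\Delta_i-\varepsilon A_i+\varepsilon B_i\sim_\Q \Delta_i$, which is still $\geq 0$ with floor zero and snc support for $\varepsilon$ small, while now $K_X+\Delta_i=K_X+(\Delta_i-\varepsilon A_i)+\varepsilon A_i$ has the shape $K_X+\Delta_i'+A$ with $A$ ample fixed. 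Thus part (2) follows from part (1).

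The main obstacle is bookkeeping in part (1): making sure that after moving a small ample-part off $f^*A$ and absorbing correction terms into the boundary, \emph{all} boundary coefficients stay strictly below $1$ simultaneously for every index $i$, and that the exceptional divisors $E_i$ genuinely drop out of the section modules in the multigraded sense (not just for each single $i$ but for all positive integral combinations $\sum n_i(K_Y+\Gamma_i'+A'')$ at once). This requires a uniform choice of $\varepsilon$ and a careful application of the negativity lemma / the fact that $f_*\OO_Y(\sum n_iE_i)=\OO_X$ for exceptional $E_i\geq 0$; once that is in place the reduction to Theorem~\ref{thm:3} is immediate via Lemma~\ref{lem:2}.
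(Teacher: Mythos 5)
Your part (1) is essentially the paper's argument: pass to a log resolution $f\colon Y\to X$, split the relative canonical as an effective boundary part minus an exceptional part, perturb $f^*A$ by a small divisor $F$ so as to obtain an honest ample $A'$ on $Y$, and absorb $F$ into the boundary while keeping the floor zero. (The paper takes $F$ effective and $f$-exceptional, which gives an actual ring isomorphism with $R(X;\dots)$; your version with $F$ merely small and snc works too once you invoke Lemma~\ref{lem:2} on Veronese subrings and $\Q$-linear equivalence, but taking $F$ exceptional is cleaner.)

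Your part (2) has a genuine gap. You write $\Delta_i\sim_\Q A_i+B_i$ with $A_i$ ample depending on $i$, set $A:=\varepsilon\sum A_i$, and then claim that $K_X+\Delta_i=K_X+(\Delta_i-\varepsilon A_i)+\varepsilon A_i$ ``has the shape $K_X+\Delta_i'+A$ with $A$ ample fixed.'' But $\varepsilon A_i\neq A$ for $r>1$, so this is not the shape required by part (1): to apply part (1) you must exhibit the \emph{same} ample $\Q$-divisor added to every boundary. (The intermediate claim $\Delta_i-\varepsilon A_i+\varepsilon B_i\sim_\Q\Delta_i$ is also incorrect as stated; the correct identity, since $\varepsilon A_i+\varepsilon B_i\sim_\Q\varepsilon\Delta_i$, is $(1-\varepsilon)\Delta_i+\varepsilon B_i+\varepsilon A_i\sim_\Q\Delta_i$.) The fix is to choose a \emph{single} ample $H$ small enough that $\Delta_i-H$ is big for every $i$, write $\Delta_i\sim_\Q E_i+H$ with $E_i\geq 0$, and set $\Delta_i'=(1-\varepsilon)\Delta_i+\varepsilon E_i$, $H'=\varepsilon H$. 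Then $K_X+\Delta_i\sim_\Q K_X+\Delta_i'+H'$ with $H'$ the same fixed ample divisor for all $i$, and $(X,\Delta_i'+H')$ is klt for $\varepsilon\ll 1$ because it is a small convex perturbation of the klt pair $(X,\Delta_i)$. With that change your reduction to part (1) and Lemma~\ref{lem:2} goes through.
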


\begin{proof}
  To prove (1), let $f\colon Y\to X$ be a log resolution of the pair $(X,\sum\Delta_i)$. For each $i$, let $\Gamma_i,G_i\geq0$
  be $\Q$-divisors on $Y$ without common components such that $G_i$ is $f$-exceptional and $K_Y+\Gamma_i=f^*(K_X+\Delta_i)+G_i$.
  Let $F\geq0$ be an $f$-exceptional $\Q$-divisor on $Y$ such that $A'=f^*A-F$ is ample, and such that $\lfloor\Gamma_i'\rfloor=0$
  for all $i$, where $\Gamma_i'=\Gamma_i+F$. Then
\[
R(X; K_X+\Delta_1+A, \dots, K_X+\Delta_r+A)\simeq R(Y; K_Y+\Gamma_1'+A', \dots, K_X+\Gamma_r'+A'),
\]
  and (1) follows from Theorem~\ref{thm:3}.

  For (2), let $H\geq0$ be an ample $\Q$-divisor on $X$ such that there exist
  divisors $E_i\geq0$ with $\Delta_i\sim_\Q E_i+H$. For a rational
  $0<\varepsilon\ll1$ set $A=\varepsilon H$ and
  $\Delta_i'=(1-\varepsilon)\Delta_i+\varepsilon E_i$. Then
  $K_X+\Delta_i\sim_\Q K_X+\Delta_i'+A$, and the pair $(X,\Delta_i'+A)$
  is klt for every $i$ since $(X,\Delta_i)$ is klt and $\varepsilon\ll1$. The ring
\[
R(X;K_X+\Delta_1^\prime+A,\dots,K_X+\Delta_r^\prime+A)
\]
  is finitely generated by (1), and (2) follows from Lemma~\ref{lem:2}.
\end{proof}

\begin{dfn}
  \label{dfn:1}
  Let $X$ be a normal projective variety, $D\in\Div^{\eff}_\R(X)$, and let
  $\Gamma$ be a geometric valuation of $X$. The
  {\em asymptotic order of vanishing\/} of $D$ along $\Gamma$ is
\[\aord_\Gamma (D)=\inf\{\mult_\Gamma D'\mid D\sim_\R D'\geq0\}\in \R.\]
\end{dfn}

If $X$ is a normal projective variety, $D$ a Cartier divisor on $X$
with $|D|\neq\emptyset$, and $\Gamma$ a geometric valuation of $X$, we write
\[\mult_\Gamma |D|=\min_{D^\prime \in |D|} \mult_\Gamma D^\prime \in \Z.\]
We use the following result without explicit mention.

\begin{lem}\label{lem:3}
  Let $X$ be a normal projective variety, $D\in\Div^{\eff}_\Q(X)$, and
  let $\Gamma$ be a geometric valuation of $X$.  Then, for all
  sufficiently divisible positive integers $p$, $o_\Gamma(D)=\inf
  \{\frac1p\mult_\Gamma|pD|\}$.
\end{lem}

\begin{proof}
  Let $F\geq0$ be an $\R$-divisor such that $F\sim_\R D$. Then there
  exist real numbers $r_1,\dots,r_k$ and rational functions
  $f_1,\dots,f_k\in k(X)$ such that $F=D+\sum_{i=1}^k r_i \ddiv f_i$.
  Let $W\subset \Div_{\mathbb R}(X)$ be the finite dimensional
  subspace based by the components of $D$ and all $\ddiv f_i$, let
  $\|\cdot\|$ be the sup norm on $W$, and let $W_0\subset W$ be the
  subspace of divisors $\mathbb R$-linearly equivalent to zero. Note
  that $W_0$ is a rational subspace of $W$, and consider the quotient
  map $\pi\colon W\longrightarrow W/W_0$. Then the set $\mathcal
  G=\{G\in \pi^{-1}(\pi(D))\mid G\ge 0\}$ is nonempty as it contains
  $F$, and it is cut out in $W$ by rational hyperplanes.  Thus, for
  every $\varepsilon>0$, $\mathcal G$ contains a $\mathbb Q$-divisor
  $D'\ge 0$ such that $D\sim_{\mathbb Q}D'$ and
  $\|F-D'\|<\varepsilon$. This proves the lemma.
\end{proof}

\begin{thm}
  \label{thm:2}
  Let $X$ be a normal projective variety and let $D_1, \dots, D_r$ be
  $\Q$-Cartier $\Q$-divisors on $X$.  Assume that the ring $R=R(X;D_1,
  \dots, D_r)$ is finitely generated, and let $\D \colon \R^r \ni
  (\lambda_1, \dots, \lambda_r)\mapsto \sum \lambda_iD_i \in
  \Div_\R(X)$ be the tautological map.
  \begin{enumerate}
  \item The support of $R$ is a rational polyhedral cone.
  \item Suppose that $\Supp R$ contains a big divisor. If
    $D\in \sum \R_+ D_i$ is pseudoeffective, then $D\in\Supp R$.
  \item There is a finite rational polyhedral subdivision $\Supp
    R=\bigcup \mcal{C}_i$ such that $\aord_\Gamma$ is a linear
    function on $\mcal{C}_i$ for every geometric valuation $\Gamma$ of
    $X$. Furthermore, there is a \emph{coarsest} subdivision with this
    property, in the sense that, if $i$ and $j$ are distinct, there is
    at least one geometric valuation $\Gamma$ of $X$ such that (the
    linear extensions of) $\aord_\Gamma|_{\mcal C_i}$ and
    $\aord_\Gamma|_{\mcal C_j}$ are different.
  \item There is a finite index subgroup $\LL \subseteq \Z^r$ such that
    for all $\n\in \N^r \cap \LL$, if $\D(\n)\in\Supp R$, then
   \[
     \aord_\Gamma \big(\D(\n)\big)=\mult_\Gamma |\D(\n)|
   \]
    for all geometric valuations $\Gamma$ of $X$.
  \end{enumerate}
\end{thm}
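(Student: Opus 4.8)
The plan is to reduce everything to the structure theory of finitely generated graded rings, following the philosophy of \cite{HK00} and \cite{ELMNP06}. First I would pass, via Lemma~\ref{lem:2}, to a Veronese subring so that $R$ is generated in degree one; concretely, there is a finite index subgroup $\LL\subset\Z^r$ and finitely many $\n^{(1)},\dots,\n^{(s)}\in\N^r\cap\LL$ such that every homogeneous piece $H^0(X,\D(\n))$ for $\n\in\N^r\cap\LL$ is spanned by products of sections in the $H^0(X,\D(\n^{(j)}))$. For part (1), the support $\Supp R$ is then the convex hull of those $\R_+\n^{(j)}$ with $H^0(X,\D(\n^{(j)}))\neq0$: one inclusion is clear, and for the other one uses that if $\D(\n)$ is $\R$-effective then by rationality of the relevant data it is arbitrarily approximable by $\Q$-effective classes in the cone, which by finite generation forces $\n$ into the convex hull. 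This gives a rational polyhedral cone.

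For part (2), the key point is that bigness is an open condition and that $R$-effectivity propagates: if $B\in\Supp R$ is big and $D\in\sum\R_+D_i$ is pseudo-effective, then for small $t>0$ the class $D+tB$ is big, hence $\R$-effective, hence (by part (1) applied after rescaling to rational points, together with Lemma~\ref{lem:2}) lies in $\Supp R$; letting $t\to0$ and using that $\Supp R$ is closed — because it is a rational polyhedral cone by (1) — yields $D\in\Supp R$.

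Parts (3) and (4) are the heart of the matter, and this is where I expect the main obstacle. The strategy is: by generation in degree one, for $\n\in\N^r\cap\LL$ the linear system $|\D(\n)|$ contains the divisor $\sum_j \mathrm{div}(s_j)$ obtained by multiplying chosen sections $s_j\in H^0(X,\D(\n^{(j)}))$, so $\mult_\Gamma|\D(\n)|\le \sum_j \mult_\Gamma|\D(\n^{(j)})|$; combined with superadditivity of $\mult_\Gamma|\cdot|$ under addition of divisors, one gets that $\n\mapsto \mult_\Gamma|\D(\n)|$ is, up to the generators, controlled by a finite amount of data, and one extracts a finite rational polyhedral fan on which it is linear. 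Passing to $o_\Gamma$ via the lemma preceding the theorem (which expresses $o_\Gamma(D)$ as $\inf\frac1p\mult_\Gamma|pD|$) and homogenizing gives the piecewise-linear description of $o_\Gamma$ on $\Supp R$; taking the common refinement over the finitely many $\Gamma$ that actually occur as components of the $\mathrm{div}(s_j)$ (the only valuations for which $o_\Gamma$ is not identically zero on the relevant cone) produces a single finite rational subdivision $\Supp R=\bigcup\mcal C_i$ with $o_\Gamma$ linear on each $\mcal C_i$ for \emph{all} $\Gamma$. The existence of a coarsest such subdivision is then a formal consequence: declare two points equivalent if every $o_\Gamma$ agrees near them, observe the equivalence classes are relatively open rational polyhedral cells because each is cut out by the finitely many linear conditions $o_{\Gamma}=o_{\Gamma}$ between adjacent pieces, and take closures. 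Finally, for (4), after enlarging $\LL$ so that the finitely many generators $\D(\n^{(j)})$ and their products realize the infimum defining $o_\Gamma$, the equality $o_\Gamma(\D(\n))=\mult_\Gamma|\D(\n)|$ holds on the nose for every $\n\in\N^r\cap\LL$ with $\D(\n)\in\Supp R$ and every $\Gamma$.

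The main difficulty is the uniformity in $\Gamma$: a priori there are infinitely many geometric valuations, and one must argue that the piecewise-linear chambers can be chosen independently of $\Gamma$. The resolution is exactly that only finitely many $\Gamma$ are "seen" by the finitely many fixed generating sections — for any other $\Gamma$, the function $o_\Gamma$ vanishes identically on $\Supp R$ (or is determined by those finitely many by linearity of $\mult_\Gamma$ on divisors) — so the common refinement is genuinely finite.

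(Here one may either cite \cite{ELMNP06} for the finite-generation-implies-piecewise-linearity package, or reprove it in the present setting; the argument above is self-contained modulo Lemma~\ref{lem:2} and the lemma on $o_\Gamma$.)
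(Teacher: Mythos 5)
Your treatments of parts (1) and (2) are fine and are essentially the same arguments the paper gives: (1) is indeed an elementary consequence of generation in degree one after a Veronese pass via Lemma~\ref{lem:2}, and (2) is the observation that $(D,B]$ consists of big (hence $\R$-effective, hence in $\Supp R$) divisors together with closedness of $\Supp R$.

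There is, however, a genuine gap in your treatment of (3) and (4), and it is exactly where you flag the ``main obstacle.'' Your argument gives the inequality $\mult_\Gamma|\D(\n)|\le\sum_j p_j\,\mult_\Gamma|\D(\n^{(j)})|$ for $\n=\sum p_j\n^{(j)}$, by multiplying sections. But you then invoke ``superadditivity of $\mult_\Gamma|\cdot|$'' to close the loop, and this is false: $\mult_\Gamma|\cdot|$ is \emph{subadditive} (a product of sections is a section), not superadditive, and the reverse inequality is precisely the nontrivial content of the theorem. It becomes true only after (a) passing to a finer rational polyhedral subdivision of $\R^r_+$ that is not visible from degree-one generation alone, and (b) replacing base ideals by their integral closures. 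Concretely, the paper invokes \cite[Proposition 4.7]{ELMNP} applied to the finitely generated system of base ideals $(\mathfrak b_{\n})$: there is a rational fan $\R^r_+=\bigcup\mcal D_i$ and a period $d$ such that on each $\mcal D_i$ the identity $\overline{\mathfrak b_{d\sum p_j e_j^i}}=\overline{\prod_j\mathfrak b_{de_j^i}^{p_j}}$ holds; since any valuation of an ideal equals that of its integral closure, this gives linearity of $\n\mapsto\mult_\Gamma|\D(\n)|$ on each $\mcal D_i$ \emph{for every} $\Gamma$ simultaneously, with one subdivision and one $\LL=(d\Z)^r$. Your alternative resolution of the uniformity problem --- that ``only finitely many $\Gamma$ are seen'' and $o_\Gamma$ vanishes identically for the rest --- is not correct: every geometric valuation whose centre lies in the stable base locus of some $\D(\n)$ gives a nonzero $o_\Gamma$ on $\Supp R$, and there are infinitely many of those as soon as the base locus is nonempty, since you must allow divisors on arbitrarily high birational models of $X$. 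What saves the day is not finiteness of the relevant set of $\Gamma$'s, but the fact that the integral-closure identity above is valuation-free. You should either cite that ELMNP proposition (as the paper does) or reprove the integral-closure statement; without it, the heart of (3)--(4) is missing.
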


\begin{proof}
  Claim (1) is obvious. For (2), let $B$ be a big divisor in $\Supp
  R$. Observe that every divisor in the interval $(D,B]$ is big, hence
  $(D,B]\subseteq \Supp R$. But then $[D,B]\subseteq \Supp R$ since
  $\Supp R$ is closed by (1).

  We extract the proofs of (3) and (4) verbatim from the proof of
  \cite[Theorem 4.1]{ELMNP}.  Consider the system of ideals $(\mfrak
  b_\n)_{\n\in\N^r}$, where $\mfrak b_\n$ is the base ideal of the
  linear system $|\D(\n)|$. This is a finitely generated system, so by
  \cite[Proposition 4.7]{ELMNP} there is a rational polyhedral
  subdivision $\R_+^r=\bigcup\mcal D_i$ and a positive integer $d$
  such that for every $i$, if $e_1^i,\dots,e_s^i$ are generators of
  $\N^r\cap\mcal D_i$, then
\[
\textstyle\overline{\mfrak b_{d\sum_j p_je_j^i}}=\overline{\prod_j \mfrak b_{d e_j^i}^{p_j}}
\]
  for every $(p_1,\dots,p_s)\in\N^s$. Since a valuation of an ideal is equal to that of
  its integral closure, we deduce that for every geometric valuation $\Gamma$ of $X$,
  $o_\Gamma$ is linear on each of the cones $\mcal C_i=\Supp R\cap \D(\mcal D_i)$, and we can take $\LL=(d\Z)^r$.
  The existence of the coarsest subdivision as in (3) follows directly from
  convexity of asymptotic order functions.
\end{proof}

The following statement forms part of \cite[Theorem~B]{CL10}; here we
prove it as an easy consequence of Theorem~\ref{thmA}.

\begin{cor}
  \label{cor:1}
  Let $(X,\Delta)$ be a projective klt pair where $\Delta$ is
  big. If $K_X+\Delta$ is pseudoeffective, then it is $\Q$-effective.
\end{cor}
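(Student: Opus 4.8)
The plan is to exhibit $K_X+\Delta$ as one of the generating rays of the support of a finitely generated adjoint ring which \emph{also} contains a big divisor, and then quote Theorem~\ref{thm:2}(2).

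First, fix an ample $\Q$-divisor $A$ on $X$ and choose $A'\sim_\Q A$ with $A'\geq0$ such that the pair $(X,\Delta+A')$ is still klt; this is possible by taking $A'=\frac1m A_m$ for a general member $A_m\in|mA|$ with $m$ sufficiently divisible, exactly as in the perturbations used in the proof of Theorem~\ref{thmA} and in Remark~\ref{rem:7}. Set
\[
D_1=K_X+\Delta,\qquad D_2=K_X+\Delta+A'.
\]
These are adjoint divisors attached to the klt pairs $(X,\Delta)$ and $(X,\Delta+A')$, and both boundaries $\Delta$ and $\Delta+A'$ are big; hence, by Theorem~\ref{thmA}(2), the ring $R=R(X;D_1,D_2)$ is finitely generated.

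Now observe that $D_2\sim_\Q(K_X+\Delta)+A$ is big, being the sum of the pseudo-effective divisor $K_X+\Delta$ and the ample divisor $A$. Therefore $\Supp R$ contains the big divisor $D_2$. Since $D_1\in\R_+D_1+\R_+D_2$ obviously and $D_1=K_X+\Delta$ is pseudo-effective by hypothesis, Theorem~\ref{thm:2}(2) applies and gives $D_1\in\Supp R$, i.e.\ $K_X+\Delta$ is $\R$-effective. As $K_X+\Delta$ is a $\Q$-divisor, it is then $\Q$-effective, since a rational divisor that is $\R$-effective is automatically $\Q$-effective (this is the content of the argument in the unlabeled lemma just above Theorem~\ref{thm:2}).

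I do not expect a genuine obstacle here — this is the ``easy consequence'' advertised in the text. The only points requiring (standard) care are the choice of $A'$ ensuring that $(X,\Delta+A')$ remains klt, i.e.\ that a small general ample $\Q$-divisor can be added to a klt boundary without spoiling kltness, and the routine passage from $\R$-effectivity to $\Q$-effectivity for the rational divisor $K_X+\Delta$.
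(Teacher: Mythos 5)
Your proof is correct and follows essentially the same route as the paper: both form the two-variable adjoint ring $R(X;K_X+\Delta,K_X+\Delta+A)$, invoke Theorem~\ref{thmA} for finite generation, observe that $\Supp R$ contains a big divisor, and conclude via Theorem~\ref{thm:2}(2). Your only small variations (choosing $A'\sim_\Q A$ effective explicitly, noting bigness of $K_X+\Delta+A$ rather than arranging it to be ample, and spelling out the passage from $\R$-effective to $\Q$-effective) are clarifications, not a different argument.
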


\begin{proof}
  Let $A$ be an ample $\Q$-divisor on $X$ such that the pair
  $(X,\Delta+A)$ is klt. By Theorem~\ref{thmA}, the adjoint ring
  \[
  R=R(X;K_X+\Delta,K_X+\Delta+A)
  \]
  is finitely generated, and $\Supp R$ contains the big divisor
  $K_X+\Delta+A$ by construction. The conclusion now follows from
  Theorem~\ref{thm:2}(2).
\end{proof}

\begin{lem}\label{lem:ords}
  Let $X$ be a normal projective variety and $D\in\Div_\Q^{\eff}(X)$.
  \begin{enumerate}
  \item If $D$ is semiample, then $\aord_\Gamma (D)=0$ for every
    geometric valuation $\Gamma$ of $X$.
  \item Assume that $R(X,D)$ is finitely generated. If $\aord_\Gamma
    (D)=0$ for every geometric valuation $\Gamma$ of $X$, then $D$
    is semiample.
  \end{enumerate}
\end{lem}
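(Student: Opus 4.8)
The plan is to prove the two directions separately, with (1) being essentially formal and (2) carrying the real content.

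For (1), suppose $D$ is semiample, so some multiple $mD$ is basepoint-free. Then $|mD|$ defines a morphism, and for the generic choice of member $D' \in |mD|$ and any fixed prime divisor $\Gamma$ on a birational model $f\colon Y \to X$, the pullback $f^*D'$ does not contain the center of $\Gamma$ in its support; equivalently, $\mult_\Gamma |mD| = 0$. More carefully, since $mD$ is basepoint-free the base ideal $\mfrak b_{mD} = \OO_X$, so $\mult_\Gamma f^*(mD) \le \mult_\Gamma$ of a general section $= 0$. Hence $\aord_\Gamma(D) \le \frac{1}{m}\mult_\Gamma|mD| = 0$, and since $\aord_\Gamma \ge 0$ always, we get $\aord_\Gamma(D) = 0$.

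For (2), assume $R = R(X,D)$ is finitely generated and $\aord_\Gamma(D) = 0$ for every geometric valuation $\Gamma$ over $X$. Applying Theorem~\ref{thm:2} to the one-divisor ring $R(X;D)$, there is a positive integer $p$ (coming from the subgroup $\LL$ of Theorem~\ref{thm:2}(4), applied in the rank-one case) such that $\aord_\Gamma(pD) = \mult_\Gamma |pD|$ for all geometric valuations $\Gamma$ over $X$; note $\aord_\Gamma(pD) = p \cdot \aord_\Gamma(D) = 0$ by hypothesis, so $\mult_\Gamma |pD| = 0$ for every $\Gamma$. Taking $\Gamma$ to range over the prime divisors appearing in the fixed part of $|pD|$, we conclude that $|pD|$ has no fixed divisorial component; but more is needed: I must show $|pD|$ is actually basepoint-free, not merely free in codimension one. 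Here is where finite generation is used in earnest: after possibly replacing $p$ by a further multiple, finite generation of $R(X,D)$ means that $R(X,D)$ is generated in a single degree, so that the base ideal stabilizes, $\mfrak b_{kpD} = \mfrak b_{pD}^k$ (up to integral closure, by Theorem~\ref{thm:2}'s proof via \cite[Proposition 4.7]{ELMNP}), and the vanishing of $\mult_\Gamma|kpD|$ for \emph{all} geometric valuations $\Gamma$ — including those obtained by blowing up arbitrary subvarieties, hence detecting arbitrary components of the base scheme — forces the integral closure of $\mfrak b_{pD}$ to be trivial. Since $\overline{\mfrak b_{pD}} = \OO_X$ implies $\mfrak b_{pD} = \OO_X$ (the trivial ideal is integrally closed and $\mfrak b_{pD} \subseteq \overline{\mfrak b_{pD}}$), the linear system $|pD|$ is basepoint-free, i.e.\ $D$ is semiample.

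The main obstacle is precisely the passage from ``$\mult_\Gamma|pD| = 0$ for all geometric valuations $\Gamma$'' to ``$|pD|$ is basepoint-free.'' A geometric valuation only sees divisorial data on birational models, so a priori it detects codimension-one behavior of the base scheme after blowing up; the point is that by blowing up a component $Z$ of the base locus $\Bs|pD|$ one produces a prime divisor $\Gamma$ with $\mult_\Gamma|pD| > 0$, contradicting the hypothesis — but to make this rigorous one should work with the base \emph{ideal} $\mfrak b_{pD}$ rather than the base locus, use that $\aord_\Gamma$ computes the valuation of the asymptotic base ideal (which by finite generation equals $\overline{\mfrak b_{pD}}$ after rescaling), and invoke that an ideal sheaf whose valuation along every geometric valuation vanishes must have trivial integral closure. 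This is exactly the content packaged in the proof of Theorem~\ref{thm:2} via \cite[Proposition 4.7]{ELMNP}, so the cleanest writeup reduces (2) to citing Theorem~\ref{thm:2}(4) together with this ideal-theoretic observation.
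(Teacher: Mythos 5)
Your proof is correct and takes essentially the same approach as the paper: part~(1) is the trivial direction, and part~(2) invokes Theorem~\ref{thm:2}(4) to convert $\aord_\Gamma(D)=0$ into $\mult_\Gamma|pD|=0$ for a single multiple $pD$. The one place where you overcomplicate matters is the final passage from ``$\mult_\Gamma|pD|=0$ for all geometric valuations $\Gamma$'' to ``$|pD|$ is basepoint free'': you route this through the integral closure of $\mfrak b_{pD}$ and the valuative criterion, which does work (and your observation that $\overline{\mfrak b}=\OO_X$ forces $\mfrak b=\OO_X$ is correct), but it is an unnecessary detour. The paper closes the argument in one line by noting that every (closed) point $x\in X$ is the centre of some geometric valuation $\Gamma$ (blow up $x$ and take an exceptional prime divisor), and $\mult_\Gamma|pD|=0$ forces some member of $|pD|$ to miss $x$; hence $\Bs|pD|=\emptyset$ directly, with no need to invoke base ideals. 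So the step you flagged as ``the main obstacle'' is in fact immediate from the definitions, and the ideal-theoretic packaging, while sound, adds complexity without adding content.
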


\begin{proof}
  Assume $D$ is semiample. Then a positive integer multiple $pD$ is
  basepoint free, thus clearly all $\aord_{\Gamma} (D)=0$.

  Conversely, if $R(X,D)$ is finitely generated and
  $\aord_\Gamma (D)=0$ for a valuation $\Gamma$, then
  the centre of $\Gamma$ on $X$ is not in $\B(D)$ by Theorem~\ref{thm:2}(4).
  Since every point on $X$ is the centre of some valuation $\Gamma$, we have $\B (D)=\emptyset$ and thus $D$ is semiample.
\end{proof}

Next we derive a special case of Kawamata's Basepoint free theorem as a consequence of Theorem \ref{thm:3}.

\begin{cor}
  \label{cor:2}
  Let $(X,\Delta)$ be a projective klt pair where $\Delta$ is big.
  If $K_X+\Delta$ is nef, then it is semiample.
\end{cor}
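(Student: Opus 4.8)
The plan is to deduce this from Lemma~\ref{lem:ords}(2): it is enough to check that the ring $R(X,K_X+\Delta)$ is finitely generated and that $\aord_\Gamma(K_X+\Delta)=0$ for every geometric valuation $\Gamma$ over $X$. Finite generation is immediate from Theorem~\ref{thmA}(2), since $\Delta$ is big. For the vanishing of all the asymptotic orders I would run a perturbation argument inside a finitely generated adjoint ring with two generators, exploiting that ample divisors have vanishing asymptotic order along every valuation by Lemma~\ref{lem:ords}(1).

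Concretely, choose a small ample $\Q$-divisor $A$ on $X$ so that the pair $(X,\Delta+A)$ is still klt; then $K_X+\Delta+A$ is ample, being the sum of a nef and an ample divisor. By Theorem~\ref{thmA} the adjoint ring $R=R(X;K_X+\Delta,K_X+\Delta+A)$ is finitely generated, and $K_X+\Delta+A$ is a big divisor lying in $\Supp R$. Since $K_X+\Delta$ is nef it is pseudo-effective, hence $\Q$-effective by Corollary~\ref{cor:1}, and it lies in $\R_+(K_X+\Delta)+\R_+(K_X+\Delta+A)$, so Theorem~\ref{thm:2}(2) puts $K_X+\Delta$ in $\Supp R$; in particular $\aord_\Gamma(K_X+\Delta)$ is defined. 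Now for every rational $s\in(0,1]$ the divisor $K_X+\Delta+sA=(1-s)(K_X+\Delta)+s(K_X+\Delta+A)$ is ample, hence semiample, so $\aord_\Gamma(K_X+\Delta+sA)=0$ for all $\Gamma$ by Lemma~\ref{lem:ords}(1). By Theorem~\ref{thm:2}(3) each $\aord_\Gamma$ is linear on the pieces of one fixed finite rational polyhedral subdivision of $\Supp R$, hence continuous on $\Supp R$; letting $s\to 0^+$ along rationals gives $\aord_\Gamma(K_X+\Delta)=0$ for every $\Gamma$, and Lemma~\ref{lem:ords}(2) then shows that $K_X+\Delta$ is semiample.

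The one step that is not purely formal is the passage to the limit $s\to 0^+$, that is, the continuity of $\aord_\Gamma$ on $\Supp R$; this is exactly where finite generation is used, through the rational polyhedral subdivision of Theorem~\ref{thm:2}(3) on whose closed pieces $\aord_\Gamma$ is linear and which therefore glue to a continuous function on $\Supp R$. Everything else --- the choice of $A$, the membership of $K_X+\Delta$ in $\Supp R$, and the reduction to Lemma~\ref{lem:ords} --- is routine.
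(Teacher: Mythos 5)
Your proof is correct and follows essentially the same route as the paper's: finite generation of the two-generator adjoint ring $R(X;K_X+\Delta,K_X+\Delta+A)$ via Theorem~\ref{thmA}, vanishing of $\aord_\Gamma$ on the ample perturbations $K_X+\Delta+sA$, piecewise linearity from Theorem~\ref{thm:2}(3) to pass to the boundary, and Lemma~\ref{lem:ords}(2) to conclude. The only superficial differences are that your appeal to Corollary~\ref{cor:1} is redundant (Theorem~\ref{thm:2}(2) already applies to any pseudo-effective divisor in the cone spanned by the generators), and that you phrase the final step as a limit $s\to 0^+$ where the paper simply observes that a piecewise linear function vanishing on a dense subcone vanishes identically.
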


\begin{proof}
  Let $A$ be an ample $\Q$-divisor on $X$ such that the pair
  $(X,\Delta+A)$ is klt. By Theorem~\ref{thmA}, the ring
\[
R=R(X; K_X+\Delta, K_X+\Delta+A)
\]
is finitely generated, and $\Supp
R=\R_+(K_X+\Delta)+\R_+(K_X+\Delta+A)$ by Corollary~\ref{cor:1}.  For
each $\varepsilon>0$, the divisor $K_X+\Delta+\varepsilon A$ is ample,
thus $o_\Gamma(K_X+\Delta+\varepsilon A)=0$ for every geometric
valuation $\Gamma$ of $X$. Therefore, all $o_\Gamma$ are identically
zero on $\Supp R$ by Theorem~\ref{thm:2}(3), and thus $K_X+\Delta$ is
semiample by Lemma \ref{lem:ords}(2).
\end{proof}

We will use the following corollary in several arguments below.

\begin{cor}
  \label{cor:5}
  Let $X$ be a normal projective variety and let $D_1, \dots, D_r$ be
  $\Q$-Cartier $\Q$-divisors on $X$.  Assume that the ring $R=R(X;D_1,
  \dots, D_r)$ is finitely generated, and let $\Supp R=\bigcup
  \mcal{C}_i$ be a finite rational polyhedral subdivision such that,
  for every geometric valuation $\Gamma$ of $X$, $\aord_\Gamma$ is
  linear on $\mcal{C}_i$, as in Theorem~\ref{thm:2}(3). Denote by
  $\varphi \colon \sum \R_+ D_i\to N^1(X)_\R$ the natural projection,
  and assume that there exists $k$ such that $\mcal{C}_k \cap
  \varphi^{-1} \bigl(\Amp X\bigr)\neq \emptyset$.

  Then $\mcal{C}_k\subseteq\Supp R\cap \varphi^{-1}\bigl(\Nef
  X\bigr)$. If, additionally, the subdivision is coarsest, then
  $\mcal{C}_k=\Supp R\cap \varphi^{-1}\bigl(\Nef X\bigr)$.
\end{cor}

\begin{proof}
Note that by Theorem~\ref{thm:2}(3) all asymptotic
order functions $\aord_\Gamma$ are identically zero on $\mcal{C}_k$,
because they are so on the nonempty subset $\mcal{C}_k\cap
\varphi^{-1}\bigl(\Amp X\bigr)$; therefore, by Theorem~\ref{thmA}
and Lemma~\ref{lem:ords}(2), every element of $\mcal{C}_k$ is
semiample and thus $\mcal{C}_k\subseteq \varphi^{-1} \bigl(\Nef
X\bigr)$.

On the other hand, it is clear that all asymptotic order functions are
identically zero on $\Supp R\cap \varphi^{-1} \bigl(\Amp X\bigr)$.
Therefore, if we assume that the subdivision is coarsest, $\Supp R \cap
\varphi^{-1}\bigl(\Amp X\bigr)$ is entirely contained in $\mcal{C}_k$,
and then we conclude that also $\Supp R \cap \varphi^{-1}\bigl(\Nef X \bigr) \subseteq
\mcal{C}_k$ since $\mcal{C}_k$ is closed.
\end{proof}

\section{Rationality, Cone and Contraction theorem}

\begin{dfn} 
  \label{dfn:2}
  Let $W$ be a finite dimensional real vector space, $\mcal C\subset
  W$ a closed convex cone which spans $W$, and $\vb \in W$. The
  \emph{visible boundary} of $\mcal{C}$ from $\vb$ is the set
\[
V=\big\{\wb\in\partial\mcal C \mid [\vb,\wb]\cap\mcal C =\{\wb\}\big\} .
\]
Note that $V=\emptyset$ when $\vb$ is in the interior of $\mathcal C$,
and $V=\{\vb\}$ when $\vb\in\partial\mathcal C$.
\end{dfn}
\begin{figure}[htb]
\begin{center}
\includegraphics[width=0.46\textwidth]{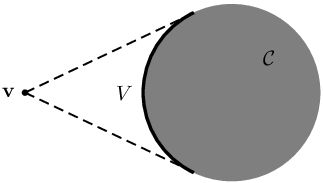}
\end{center}
\end{figure}

The following statement is Kawamata's reformulation of the Rationality, Cone
and Contraction theorem \cite{Kaw09}. Here, $\relint$
denotes the relative interior (i.e.\ the interior in the relative topology).

\begin{thm}[Rationality, Cone and Contraction theorem]\label{thm:cone}
  Let $(X,\Delta)$ be a projective klt pair. Let $V$ be the visible
  boundary of $\Nef(X)$ from the class
  $\vb_0\in N^1 (X)_\R$ of the divisor $K_X+\Delta$. Then:
  \begin{enumerate}
  \item every compact subset $F \subseteq \relint V$ is contained in a
  union of finitely many supporting rational hyperplanes;
\item every $\Q$-Cartier $\Q$-divisor on $X$ with class in $\relint V$
  is semiample.
  \end{enumerate}
\end{thm}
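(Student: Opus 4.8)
The plan is to reduce the statement to the finite-generation results already in hand by a perturbation argument inside $\Div_\R(X)$. First I would fix an ample $\Q$-divisor $A$ and observe that, since $(X,\Delta)$ is klt, the pair $(X,\Delta+tA)$ is klt for $0\le t\ll 1$; moreover by Remark~\ref{rem:7} (after passing to a small $\Q$-factorialization if $X$ is not already $\Q$-factorial, which is itself a consequence of Theorem~\ref{thmA}) one can, for any class $\vb$ sufficiently close to $\vb_0$, realize $\vb$ as $K_X+\Delta'$ for a klt pair with big $\Delta'$. The point of working with the visible boundary $V$ is that every nef class $\wb\in V$ lies on a segment $[\vb_0,\wb]$ meeting $\Nef(X)$ only at $\wb$, so perturbing $\Delta$ in the direction of $\wb$ produces, for small parameters, divisors of the form $K_X+\Delta''$ that are still klt with big boundary and whose classes sweep out a neighbourhood of $\wb$ in the cone.

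Next I would set up the divisorial ring. Given a compact $F\subset\relint V$, cover it by finitely many classes $\wb_1,\dots,\wb_m$; for each, choose $\Q$-divisors $D_j$ with class near $\wb_j$ of the form $K_X+\Delta_j$ with $(X,\Delta_j)$ klt and $\Delta_j$ big, together with $D_0=K_X+\Delta+A$ (or $\Delta$ itself) as an anchor giving a big divisor in the support. By Theorem~\ref{thmA}(2) the adjoint ring $R=R(X;D_0,D_1,\dots,D_m)$ is finitely generated. Then Theorem~\ref{thm:2}(1) says $\Supp R$ is rational polyhedral, and Theorem~\ref{thm:2}(2) — using that $D_0$ is big and in $\Supp R$ — says every pseudo-effective class in the cone spanned by the $D_j$ lies in $\Supp R$; in particular the $\wb_j$ and nearby nef classes do. Claim (2) of the theorem, semiampleness, then follows exactly as in Corollary~\ref{cor:2}: a $\Q$-divisor $D$ with class in $\relint V$ is nef, so $K_X+\Delta_j+\varepsilon(\text{ample})$-type perturbations are ample and have vanishing $o_\Gamma$, hence by convexity and Theorem~\ref{thm:2}(3) all $o_\Gamma$ vanish on the relevant cone in $\Supp R$, and Lemma~\ref{lem:ords}(2) gives semiampleness.

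For Claim (1), the rational polyhedrality of $\Supp R$ is the engine: the faces of $\Nef(X)$ near $F$ are cut out by the rational polyhedral structure of $\Supp R$ intersected with $\Nef(X)$, and a class in $\relint V$ lies on a supporting hyperplane of $\Nef(X)$ which, being a face of a rational polyhedral cone, is defined over $\Q$. Concretely, near each $\wb_j$ the image $\varphi(\Supp R)$ in $N^1(X)_\R$ is a rational polyhedral neighbourhood of $\wb_j$ inside $\Nef(X)$, and the boundary faces it meets are rational; since $F$ is compact it meets finitely many such faces, giving finitely many supporting rational hyperplanes. The main obstacle is the first step: carefully producing, for every class in a neighbourhood of a nef boundary class, an honest klt pair with big boundary in the right linear equivalence class, and checking that the cone spanned by the chosen $D_j$ actually contains a neighbourhood of $F$ in $\Nef(X)$ — this is where the geometry of the visible boundary (the segments $[\vb_0,\wb]$ touching $\Nef(X)$ only at $\wb$) is used, and where one must be careful that the perturbed divisors remain effective and klt rather than merely having the right numerical class. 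Everything after that is a packaging of Theorem~\ref{thm:2} and Lemma~\ref{lem:ords} in the manner of Corollaries~\ref{cor:1} and \ref{cor:2}.
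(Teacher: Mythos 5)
Your plan captures the first half of the paper's strategy — perturb $\Delta$ towards the visible boundary so that nearby classes are represented by klt divisors, invoke Theorem~\ref{thmA} to get a finitely generated adjoint ring $R$ whose support contains a neighbourhood of $F$, and deduce (2) exactly as in Corollary~\ref{cor:2} — but the argument for (1) has a genuine gap. You write that ``the faces of $\Nef(X)$ near $F$ are cut out by the rational polyhedral structure of $\Supp R$ intersected with $\Nef(X)$'' and that therefore they are rational. This does not follow: $\Nef(X)$ is generically a round cone, and the intersection of a rational polyhedral cone with $\Nef(X)$ need not be rational polyhedral. The local rational polyhedrality of $\Nef(X)$ near $F$ is precisely the content of (1), and it is not a formal consequence of $\Supp R$ being rational polyhedral. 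You have also conflated $\varphi(\Supp R)$ (which contains pseudo-effective but not necessarily nef classes) with a neighbourhood inside $\Nef(X)$; these are very different sets.

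The missing ingredient is the pair Theorem~\ref{thm:2}(3) and Lemma~\ref{lem:ords}. The paper takes the \emph{coarsest} rational polyhedral subdivision $\mcal L = \Supp R = \bigcup \mcal L_k$ on which all asymptotic order functions $o_\Gamma$ are linear, and then argues: there is a cell $\mcal L_k$ meeting $\varphi^{-1}(\Amp(X))$; all $o_\Gamma$ vanish on ample classes, hence by linearity vanish identically on $\mcal L_k$; by Lemma~\ref{lem:ords}(2) every divisor in $\mcal L_k$ is then semiample, so $\mcal L_k \subset \varphi^{-1}(\Nef(X))$; conversely the ample region lies in a single cell by coarseness, and taking closures gives $\mcal L \cap \varphi^{-1}(\Nef(X)) = \mcal L_k$. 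This identifies the nef locus inside $\Supp R$ with \emph{a single cell of a rational polyhedral fan}, and only then does the rational polyhedrality of the relevant piece of $\partial\Nef(X)$ follow; $\varphi^{-1}(F)$ sits in $\partial\mcal L_k$, which is a finite union of rational faces. Without this argument you have no way to go from ``$\Supp R$ is rational polyhedral'' to ``$\partial\Nef(X)$ is locally rational polyhedral.''

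A smaller point: the paper avoids $\Q$-factoriality assumptions and Remark~\ref{rem:7} entirely by observing that the vertices $\wb_j$ of a rational cube-hull of a cover of $F$ lie in the interior of $\R_+\vb_0 + \Nef(X)$, so each $\wb_j$ can be written as $t_j\vb_0 + (1-t_j)\ab_j$ with $\ab_j$ rational ample, i.e.\ as the class of $t_j(K_X + \Delta + A_j)$ with $A_j$ ample; this feeds directly into Theorem~\ref{thmA}(1) for normal $X$. Your route through Remark~\ref{rem:7} and a $\Q$-factorialization is workable but adds an unnecessary reduction step.
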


\begin{rem}
  \label{rem:2}
  Let $L$ be a $\Q$-Cartier $\Q$-divisor with class in $\relint V$. By
  the theorem, $L$ is semiample; in other words, there is a
  contraction morphism
\[
f_L \colon X \to Y
\]
  and $L=f_L^\ast A$ for an ample $\Q$-divisor $A$ on $Y$.

  If $F_L \subseteq V$ is the smallest face containing $L$, then we have
  natural identifications
\[
\Nef (Y) = F_L \subset N^1(Y)_\R=\langle F_L \rangle \subseteq N^1(X)_\R,
\]
  where $\langle F_L \rangle$ denotes the vector subspace generated by
  $F_L$.

  If $L$ and $M$ are $\Q$-divisors in $\relint V$, and $F_{L} \subseteq
  F_{M}$, then there is a factorization $f_L=g \circ f_M$.
  In particular, $M=f_M^\ast D$, where $D=g^\ast A$ is a semiample
  divisor.

  The $f_L$ are contractions of faces of $\NEbar(X)$ in
  Minimal Model Program; extremal contractions correspond to those $L$ that lie in
  the relative interiors of faces of maximal dimension.
\end{rem}

\begin{proof}[Proof of Theorem~\ref{thm:cone}]
  We work with $N^1(X)_\R$ equipped with the sup norm.

  We first prove (1). If $\vb_0\in \Nef (X)$, there is nothing to
  prove; thus, we can and will assume that $\vb_0\notin \Nef (X)$.
  Consider the cone $\mcal{C}=\R_+ \vb_0 + \Nef (X) $; by compactness
  of $F$, there is a rational number $0<\varepsilon\ll1$ and finitely
  many rational points $\ub_1,\dots,\ub_p\in\Int\mcal C$ such that
  $F\subseteq\Int\big(\bigcup
  B(\ub_i,\varepsilon)\big)\subseteq\Int\mcal C$, where $B(\ub_i,
  \varepsilon)$ denotes the closed ball. Since we are working in the
  sup norm, $B(\ub_i, \varepsilon)$ are cubes, thus the convex hull
  $\mcal B$ of $\bigcup B(\ub_i,\varepsilon)$ is a rational polytope:
  denote by $\wb_1,\dots,\wb_m$ its vertices. Then
\[
\wb_j\in\Int\mcal{C}= \bigcup_{\ab \in \Amp (X)} (\vb_0,\ab),
\]
so there exist rational ample classes $\ab_j$ and rational numbers
$t_j\in(0,1)$ such that $\wb_j=t_j\vb_0+(1-t_j)\ab_j$. For each $j$,
choose an ample $\Q$-divisor $A_j$ with class $\frac{1-t_j}{t_j}\ab_j$
such that the pair $(X, \Delta+A_j)$ is klt; then $\wb_j$ is the class
of the divisor $t_j(K_X+\Delta+A_j)$.  By Theorem~\ref{thmA}(2), the
adjoint ring
\[
R=R(X;K_X+\Delta+A_1, \dots, K_X+\Delta+A_m)
\]
is finitely generated, and denote by $\varphi \colon \sum \R_+
(K_X+\Delta+A_j)\to N^1(X)_\R$ the natural projection; by
construction, $F \subseteq \varphi \bigl(\sum \R_+
(K_X+\Delta+A_j)\bigr)=\R_+\mathcal B$. Then $\mcal{L}=\Supp
R\subset \Div_\R(X)$ is a rational polyhedral cone by
Theorem~\ref{thm:2}(1), and since $F\subseteq\Int\mcal
B\cap\partial\Nef(X)$, we have $\mcal B\cap\Amp(X)\neq\emptyset$, so
$\mcal L$ contains ample divisors as $\varphi(\mathcal
L)\subseteq\R_+\mathcal B$.  Therefore $\varphi^{-1}(F) \subseteq
\mcal{L}$ by Theorem~\ref{thm:2}(2), as every class in $F$ is
pseudoeffective.

Let $\mcal{L}=\bigcup \mcal{L}_k$ be the coarsest subdivision as in
Theorem~\ref{thm:2}(3). Then there exists $k$ such that $\mcal{L}_k
\cap \varphi^{-1} \bigl(\Amp X\bigr)\neq \emptyset$, and thus
$\mcal{L}_k=\mcal{L}\cap \varphi^{-1}\bigl(\Nef X\bigr)$ by
Corollary~\ref{cor:5}. Since $\varphi^{-1} (F) \subseteq \mcal{L} \cap
\varphi^{-1} \bigl(\Nef X\bigr)$, this implies $\varphi^{-1}(F)
\subseteq \partial \mcal{L}_k$, hence we have (1).

We now prove (2). Let $D$ be a $\Q$-divisor on $X$ with class $\vb \in
\relint V$.  As above, there is a rational ample class $\ab$ and a
rational number $t\in (0,1)$ such that $\vb=t\vb_0+(1-t)\ab$.  Then we
can choose an ample $\Q$-divisor $A$ with class
$\bigl(\frac{1-t}{t}\bigr)\ab$ such that the pair $(X,\Delta+A)$ is
klt and
\[
D\sim_\Q t(K_X+\Delta +A).
\]
Now $D$ is semiample by Corollary~\ref{cor:2}.
\end{proof}

%\begin{rem}
%  The relative version of the Cone and Contraction theorem, i.e.\ when
%  we have a morphism $f\colon X\longrightarrow Y$ between projective
%  varieties, follows easily from the absolute case, see \cite[3.26,
%  3.28]{KM98}.
%\end{rem}

\section{Birational contractions}

A birational map $f\colon X\dashrightarrow Y$ between normal varieties
is a {\em birational contraction\/} if the rational map $f^{-1}$ does
not contract divisors; in other words, some open subset of $X$ is
isomorphic to an open set $U\subseteq Y$ with $\codim_Y (Y\setminus
U)\geq2$. If additionally $X$ and $Y$ are $\Q$-factorial, and
$(p,q)\colon W\longrightarrow X\times Y$ is a resolution of $f$, then
we define the map $f^*\colon \Div_\R(Y)\longrightarrow\Div_\R(X)$ as
$f^*=p_*\circ q^*$; this does not depend on the choice of $W$.  Note
that $f^*=f^{-1}_*$ if $f$ is an isomorphism in codimension $1$.
Extremal contractions and flips are examples of birational
contractions.

\begin{lem}
  \label{rem:5}
  Let $X$ and $Y$ be $\Q$-factorial projective varieties,
  let $f \colon X\dashrightarrow Y$ be a birational contraction,
  and let $\tilde f\colon k(X)\simeq k(Y)$ be the induced isomorphism. Then:
  \begin{enumerate}
  \item $f_*\ddiv_X\varphi=\ddiv_Y\tilde f(\varphi)$ for every
    $\varphi\in k(X)$;
  \item for every geometric valuation $\Gamma$ on $k(X)$ and for every
    $\varphi\in k(X)$ we have $\mult_\Gamma(\ddiv_X
    \varphi)=\mult_\Gamma\big(\ddiv_Y\tilde f(\varphi)\big)$;
\item if $f$ is an isomorphism in codimension one, then
  $f_*\colon\Div_\R(X)\longrightarrow\Div_\R(Y)$ is an isomorphism, and
  for every $D\in\Div_\R(X)$ the map $\tilde f$ restricts to the
  isomorphism $H^0(X,D)\simeq H^0(Y,f_*D)$.
  \end{enumerate}
\end{lem}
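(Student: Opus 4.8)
The plan is to prove the three statements of Lemma~\ref{rem:5} essentially by unwinding the definitions on a common resolution $(p,q)\colon W\to X\times Y$ of $f$, and reducing everything to the birational invariance of orders of vanishing of rational functions.

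For (1), I would fix $\varphi\in k(X)$ and compute $f_*\ddiv_X\varphi=p_*q^*\ddiv_X\varphi'$ where the identification $k(W)\simeq k(X)\simeq k(Y)$ lets us regard $\varphi$ as a rational function $\varphi_W$ on $W$; then $q^*\ddiv_X\varphi=\ddiv_W\varphi_W$ since pulling back the divisor of a rational function along a birational (indeed any dominant) morphism gives the divisor of that function on the source, and $p_*\ddiv_W\varphi_W=\ddiv_Y\tilde f(\varphi)$ by the same principle applied to the birational morphism $p$ together with the fact that $p$-exceptional divisors are pushed to zero. The only point requiring care is that $f^{-1}$ contracts no divisors, so that every prime divisor on $Y$ is the image of a prime divisor on $W$ not contracted by $p$; this is exactly where the hypothesis that $f$ is a birational contraction enters, and it guarantees the matching of the nonzero parts of the two divisors.

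For (2), given a geometric valuation $\Gamma$ on $k(X)=k(Y)$, by definition $\mult_\Gamma(\ddiv_X\varphi)=\mult_\Gamma(g^*\ddiv_X\varphi)=\mult_\Gamma(\ddiv_Z\varphi_Z)$ for any model $g\colon Z\to X$ realizing $\Gamma$, and likewise on the $Y$ side; choosing $Z$ to dominate $W$ (hence both $X$ and $Y$) and using part (1)'s computation that $q^*\ddiv_X\varphi=p^*\ddiv_Y\tilde f(\varphi)$ as divisors on $W$ up to the respective exceptional loci, one sees both sides equal $\ord_\Gamma(\varphi)$, the intrinsic order of vanishing of the rational function $\varphi$ at $\Gamma$. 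The cleanest phrasing is: $\mult_\Gamma(\ddiv_X\varphi)$ depends only on the valuation $\Gamma$ of $k(X)$ and the function $\varphi$, not on the model, so the isomorphism $\tilde f$ carries it to the corresponding quantity over $Y$.

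For (3), when $f$ is an isomorphism in codimension one, both $p$ and $q$ are isomorphisms in codimension one, so $p_*$ and $q^*$ are isomorphisms $\Div_\R(X)\simeq\Div_\R(W)\simeq\Div_\R(Y)$ preserving effectivity and prime-by-prime structure outside codimension-two sets; hence $f_*=p_*q^{*-1}$ is an isomorphism of $\Div_\R$. For the statement on sections, take $D\in\Div_\R(X)$ and $\psi\in k(X)$; then $\psi\in H^0(X,D)$ iff $\ddiv_X\psi+D\geq0$, and applying (1) (in the $\R$-linear extension, i.e. prime-component-wise) we get $\ddiv_Y\tilde f(\psi)+f_*D=f_*(\ddiv_X\psi+D)\geq0$, since $f_*$ preserves effectivity in codimension one and, being an isomorphism of divisor groups here, reflects it as well. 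Thus $\tilde f$ maps $H^0(X,D)$ bijectively onto $H^0(Y,f_*D)$. I expect the main obstacle to be purely bookkeeping: keeping straight the three identifications $k(X)\simeq k(W)\simeq k(Y)$ and verifying that each pushforward/pullback operation respects the decomposition into prime components away from codimension two, so that inequalities like $\ddiv_X\psi+D\geq0$ transport faithfully; no deep input beyond the definition of a birational contraction and the birational invariance of $\ord_\Gamma$ is needed.
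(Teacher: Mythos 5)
Your route is genuinely different from the paper's. The paper proves (1) and (3) working directly in codimension one: since $f$ is a birational contraction, there are open sets $U\subset X$ and $V\subset Y$ with $f|_U\colon U\to V$ an isomorphism and $\codim_Y(Y\setminus V)\geq 2$; then $(f_*\ddiv_X\varphi)|_V=(\ddiv_Y\tilde f(\varphi))|_V$, and a divisor on a normal variety is determined by its restriction to a big open subset, which gives (1). When $f$ is an isomorphism in codimension one, $\codim_X(X\setminus U)\geq 2$ as well, and (3) follows from $H^0(U,D)\simeq H^0(V,f_*D)$ together with normality. The paper only passes to a common resolution $W$ for (2), which matches your treatment there. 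Your resolution-based argument for (1) also works, but note that the birational contraction hypothesis is used not, as you suggest, in the pushforward of the principal divisor along $W\to Y$ (that step is automatic for any proper birational morphism), but rather in identifying the strict transform $f_*\ddiv_X\varphi$ with the composite $q_*p^*\ddiv_X\varphi$ (in the paper's labelling $p\colon W\to X$, $q\colon W\to Y$): the divisor $p^*\ddiv_X\varphi-p_*^{-1}\ddiv_X\varphi$ is supported on $\Exc(p)$, and one needs $\Exc(p)\subseteq\Exc(q)$ so that $q_*$ kills it.

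Your (3) contains a genuine error, not merely a bookkeeping slip. You claim that when $f$ is an isomorphism in codimension one, $p$ and $q$ are also isomorphisms in codimension one, so that $\Div_\R(X)\simeq\Div_\R(W)\simeq\Div_\R(Y)$. This is false: the resolution $W$ has in general strictly more prime divisors than $X$ or $Y$. If $X$ is singular, the exceptional locus of any resolution of the $\Q$-factorial variety $X$ has pure codimension one; and even when $X$ and $Y$ are smooth, as soon as $f$ is not a morphism one must blow up its indeterminacy locus. So $q^*$ is not surjective, $p_*$ is not injective, and the map $p_*q^{*-1}$ you write is not defined. The conclusion that $f_*\colon\Div_\R(X)\to\Div_\R(Y)$ is an isomorphism is of course still true, but the correct reason is that $f_*$ and $(f^{-1})_*$ are mutually inverse bijections on prime divisors; equivalently, restrict to the big open sets $U$ and $V$ as in the paper's proof. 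Once this is repaired, the remainder of your argument for the isomorphism on global sections goes through.
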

\begin{proof}
  For (1), let $U\subseteq X$ and $V\subseteq Y$ be open subsets such
  that $f_{|U}\colon U\longrightarrow V$ is an isomorphism and
  $\codim_Y(Y\setminus V)\geq2$. Then obviously
  $(f_*\ddiv_X\varphi)_{|V}=(\ddiv_Y\tilde f(\varphi))_{|V}$, thus the
  claim.  The second claim is easily verified on a common resolution
  of $X$ and $Y$.  If additionally $\codim_X(X\setminus U)\geq2$, then
  (3) follows from $H^0(U,D)\simeq H^0(V,f_*D)$ since $X$ and $Y$ are
  normal.
\end{proof}

The following lemma will be used in the proof of termination with scaling.

\begin{lem}\label{lem:1}
  Let $X$ and $Y$ be $\Q$-factorial projective varieties and let $f
  \colon X\dashrightarrow Y$ be a birational map which is an
  isomorphism in codimension one. Let $\mcal
  C\subset\Div_\R^{\eff}(X)$ be a cone, and fix a geometric valuation
  $\Gamma$ of $X$.  Then the asymptotic order of vanishing
  $\aord_\Gamma$ is linear on $\mcal C$ if and only if it is linear on
  $f_*\mcal C\subset \Div_\R^{\eff} (Y)$.
\end{lem}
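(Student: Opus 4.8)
The plan is to reduce the statement to the observation that $f$ induces a bijection between $\R$-effective divisors on $X$ and on $Y$ that is compatible with asymptotic orders along $\Gamma$. First I would recall from Lemma~\ref{rem:5}(3) that, since $f$ is an isomorphism in codimension one, the pushforward $f_* \colon \Div_\R(X) \to \Div_\R(Y)$ is a linear isomorphism, and that for every $D \in \Div_\R(X)$ the induced isomorphism $\tilde f$ of function fields restricts to an isomorphism $H^0(X,D) \simeq H^0(Y, f_*D)$. Since $\R$-linear equivalence is detected on the level of $\ddiv$'s and $\tilde f$ matches $\ddiv_X\varphi$ with $\ddiv_Y\tilde f(\varphi)$ by Lemma~\ref{rem:5}(1), it follows that $f_*$ carries $\Div_\R^{\kappa\geq0}(X)$ onto $\Div_\R^{\kappa\geq0}(Y)$, and that $D' \sim_\R D$, $D' \geq 0$ if and only if $f_*D' \sim_\R f_*D$, $f_*D' \geq 0$. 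So $f_*$ induces a bijection $\{D' \sim_\R D, D'\geq 0\} \leftrightarrow \{E' \sim_\R f_*D, E'\geq 0\}$.

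Next I would use Lemma~\ref{rem:5}(2): for a fixed geometric valuation $\Gamma$ over $X$ (which, via $\tilde f$, is also a geometric valuation over $Y$), and for every $\varphi \in k(X)$, one has $\mult_\Gamma(\ddiv_X\varphi) = \mult_\Gamma(\ddiv_Y\tilde f(\varphi))$. Combining this with the identity $f_*(D') = f_*D + f_*(\ddiv_X\varphi)$ whenever $D' = D + \ddiv_X\varphi$, I would check that $\mult_\Gamma(f_*D') = \mult_\Gamma(D')$ for every such competitor $D'$ — here one must be mildly careful that the components of $D'$ not contracted by $f$ contribute equally on both sides, while the contracted components do not appear in $f_*D'$ but also do not affect $\mult_\Gamma$ once $\Gamma$'s center is handled by passing to a common resolution. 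Taking the infimum over all competitors in Definition~\ref{dfn:1} then gives $\aord_\Gamma(D) = \aord_\Gamma(f_*D)$ for every $D \in \Div_\R^{\kappa\geq0}(X)$.

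Finally, linearity of $\aord_\Gamma$ on $\mcal C$ is a statement purely about the values of the function $\aord_\Gamma$ at points of $\mcal C$, and $f_*$ is a linear isomorphism carrying $\mcal C$ onto $f_*\mcal C$ and commuting with the function $\aord_\Gamma$ by the previous paragraph. Hence $\aord_\Gamma|_{\mcal C}$ agrees with $\aord_\Gamma|_{f_*\mcal C} \circ f_*$, so one is linear precisely when the other is; this proves the lemma in both directions (and the converse direction is symmetric, applying the argument to $f^{-1}$).

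The main obstacle I anticipate is the bookkeeping in the middle step: making sure that the $f$-exceptional components (equivalently, the components contracted by $f^{-1}$, of which there are none, and the components contracted by $f$) are correctly accounted for, so that the equality $\mult_\Gamma(f_*D') = \mult_\Gamma(D')$ genuinely holds for all competitors and not just generically. This is where passing to a resolution $(p,q)\colon W \to X \times Y$ of $f$ and tracking $\mult_\Gamma$ via $p^*$ and $q^*$, as in the definition of $f^*$ and in the proof of Lemma~\ref{rem:5}(2), does the work; everything else is formal.
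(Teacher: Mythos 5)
There is a genuine gap in the middle step, and it matters. You claim that $\mult_\Gamma(f_*D') = \mult_\Gamma(D')$ for every competitor $D'\sim_\R D$ with $D'\geq 0$, and hence that $\aord_\Gamma(D)=\aord_\Gamma(f_*D)$. Neither is true in general. Recall how $\mult_\Gamma$ of an $\R$-Cartier divisor is defined: one pulls back to the birational model where $\Gamma$ lives. Take a common resolution $(p,q)\colon W\to X\times Y$ and suppose $\Gamma$ is a $p$-exceptional (hence $q$-exceptional) prime divisor on $W$. Then $p^*D'$ and $q^*f_*D'$ agree only up to a $p$-exceptional divisor, and this discrepancy is nonzero in general; so $\mult_\Gamma D' = \mult_\Gamma p^*D'$ and $\mult_\Gamma f_*D' = \mult_\Gamma q^*f_*D'$ differ. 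What Lemma~\ref{rem:5}(1)--(2) actually give you is the equality $\mult_\Gamma P = \mult_\Gamma f_*P$ for \emph{principal} divisors $P$ (and hence $\R$-linear combinations of them), since $p^*\ddiv_X\varphi = \ddiv_W(\varphi\circ p) = q^*\ddiv_Y\tilde f(\varphi)$. The divisor $D'$ itself is not of this form, only the difference $D'-D$ is, so your bookkeeping sentence about exceptional components does not close the gap.

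The correct argument — and the one the paper uses — exploits exactly this: it does not compare $\aord_\Gamma(D)$ with $\aord_\Gamma(f_*D)$ directly, but rather normalizes by subtracting $\mult_\Gamma D$. Writing $V_D=\{D_X - D : D_X\sim_\R D,\ D_X\geq 0\}$ (a set of $\R$-combinations of principal divisors) and the analogous $W_D$ on $Y$, one has $f_*\colon V_D\simeq W_D$ and $\mult_\Gamma P=\mult_\Gamma f_*P$ for $P\in V_D$, whence
\[
\aord_\Gamma(D)-\mult_\Gamma D = \inf_{P\in V_D}\mult_\Gamma P = \inf_{P\in V_D}\mult_\Gamma f_*P = \aord_\Gamma(f_*D)-\mult_\Gamma f_*D.
\]
So $\aord_\Gamma(D)-\aord_\Gamma(f_*D)=\mult_\Gamma D-\mult_\Gamma f_*D$, which is a \emph{linear} function of $D$ (not identically zero). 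This weaker statement still implies the lemma: linearity of $\aord_\Gamma$ on $\mcal C$ is equivalent to linearity of $\aord_\Gamma\circ f_*$ on $\mcal C$ since they differ by a linear function, and $f_*$ is a linear isomorphism. Your final paragraph's conclusion is therefore correct, but the identity you use to reach it is false; you need to replace equality of the two asymptotic order functions with the statement that their difference is linear.
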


\begin{proof}
  For every $D\in\mcal C$, write $V_D=\{D_X-D\mid D\sim_\R D_X\textrm{
    and }D_X\geq0\}\subset\Div_\R(X)$ and $W_D=\{D_Y-f_*D\mid
  f_*D\sim_\R D_Y\text{ and }D_Y\geq0\}\subset\Div_\R(Y)$.  Note that
  the elements of $V_D$ and $W_D$ are $\R$-linear combinations of
  principal divisors.  By Lemma~\ref{rem:5} we have the isomorphism
  $f_*|_{V_D}\colon V_D\simeq W_D$, and $\mult_\Gamma P_X=\mult_\Gamma
  f_*P_X$ for every $P_X\in V_D$. Therefore
\[
o_\Gamma(D)-\mult_\Gamma D=\inf_{P_X\in V_D}\mult_\Gamma
P_X=\inf_{P_X\in V_D}\mult_\Gamma f_*P_X=o_\Gamma(f_*D)-\mult_\Gamma
f_*D,
\]
hence the function $o_\Gamma(\cdot)-o_\Gamma\big(f_*(\cdot)\big)\colon
\mcal C\longrightarrow\R$ is equal to the linear map
$\mult_\Gamma(\cdot)-\mult_\Gamma f_*(\cdot)$.  The claim now follows.
\end{proof}

The following lemma is well known \cite[Lemma~1.7]{HK00}.

\begin{lem}\label{lem:HK}
  Let $X$, $Y$ and $Z$ be normal varieties projective over a variety
  $S$, and $f\colon X\dashrightarrow Y$ and $g\colon X\dashrightarrow
  Z$ birational contractions.  Suppose that there exist a divisor $A$
  on $Y$ which is ample over $S$ and a divisor $B$ on $Z$ which is nef
  over $S$ such that
\[
f^*A+F=g^*B+G,
\]
  where $F\geq0$ is $f$-exceptional and $G\geq0$ is $g$-exceptional. Then the
  birational map $f\circ g^{-1}\colon Z\dasharrow Y$ is a morphism. \qed
\end{lem}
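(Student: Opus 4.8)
\emph{Proof proposal.} The plan is to resolve both maps on a single smooth model, turn the hypothesis into a clean relation of divisors there, and then exploit the fact that $A$ is \emph{ample} (while $B$ is merely nef) by identifying the morphism to $Y$ with the one defined by the corresponding linear system.

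\medskip\noindent
First I would choose a smooth projective variety $W$ with birational morphisms $p\colon W\to X$, $q\colon W\to Y$, $r\colon W\to Z$ simultaneously resolving $f$ and $g$. Because $f$ and $g$ are birational contractions, $f^{-1}$ and $g^{-1}$ contract no divisors, so every $p$-exceptional prime divisor on $W$ is automatically $q$- and $r$-exceptional, and the strict transforms of the components of $F$ (resp.\ $G$) are $q$-exceptional (resp.\ $r$-exceptional); thus $p^{*}F$ is $q$-exceptional and $p^{*}G$ is $r$-exceptional. Unwinding the definition $f^{*}=p_{*}q^{*}$ (as in Lemma~\ref{rem:5}; see also \cite[\S1]{HK00}) gives equalities of divisors $p^{*}(f^{*}A)=q^{*}A+E_{f}$ and $p^{*}(g^{*}B)=r^{*}B+E_{g}$ with $E_{f}\ge 0$ $q$-exceptional and $E_{g}\ge 0$ $r$-exceptional. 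Pulling back $f^{*}A+F=g^{*}B+G$ via $p$ and rearranging, I obtain on $W$ the equality of divisors
\[
q^{*}A+F'=r^{*}B+G',
\]
where $F'=E_{f}+p^{*}F\ge 0$ is $q$-exceptional and $G'=E_{g}+p^{*}G\ge 0$ is $r$-exceptional.

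\medskip\noindent
Next I would fix a sufficiently divisible $m>0$ so that $mA$, $mB$ and all the divisors above are integral and $mA$ is very ample on $Y$. Since $W$ is smooth and $Y$ is normal, $q_{*}\OO_{W}=\OO_{Y}$, hence $H^{0}(W,mq^{*}A)=H^{0}(Y,mA)$, the system $|mq^{*}A|$ is base point free, and the morphism $\Phi_{|mq^{*}A|}$ is precisely $q$ followed by the embedding of $Y$ by $|mA|$. On the other side, as $G'$ is effective and $r$-exceptional we have $r_{*}\OO_{W}(mG')=\OO_{Z}$, so $H^{0}(W,mr^{*}B+mG')=H^{0}(Z,mB)$ with every section pulled back from $Z$; consequently $\Phi_{|mr^{*}B+mG'|}=\Phi_{|mB|}\circ r$ as rational maps (no semiampleness of $B$ is needed, only that the sections live on $Z$). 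Multiplication by the canonical section of $\OO_{W}(mF')$ embeds $H^{0}(W,mq^{*}A)$ into $H^{0}(W,mr^{*}B+mG')$, and adding the fixed divisor $mF'$ does not change the associated map; thus $|mq^{*}A|$ is a linear subsystem of $|mr^{*}B+mG'|$, so $q=\Phi_{|mq^{*}A|}$ is a linear projection of $\Phi_{|mr^{*}B+mG'|}=\Phi_{|mB|}\circ r$, i.e.\ $q$ factors through $r$ as a rational map $q=h'\circ r$ with $h'\colon Z\dashrightarrow Y$. On the common open locus where $f$ and $g$ are isomorphisms, $h'$ agrees with $f\circ g^{-1}$, so $h'=h:=f\circ g^{-1}$.

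\medskip\noindent
To finish, I observe that $h'$ is defined by sections pulled back from $Z$, so the indeterminacy locus of $h'\circ r$ equals $r^{-1}(\text{indet}\,h')$; since $q=h'\circ r$ is a morphism and $r$ is surjective, $h'$ has empty indeterminacy, hence $h=f\circ g^{-1}\colon Z\to Y$ is a morphism (with image $h(Z)=q(W)=Y$). The step I expect to be the most delicate is the reduction of the first paragraph: carefully verifying the negativity relations $p^{*}(f^{*}A)=q^{*}A+(\text{effective }q\text{-exceptional})$ and the exceptionality of $p^{*}F$, which is exactly where the birational-contraction hypothesis is used; the only genuinely conceptual point afterwards is the asymmetry between $A$ and $B$, namely that ampleness of $A$ lets us recover $q$ from $|mq^{*}A|$ while nefness of $B$ suffices because we never need $|mB|$ base point free.
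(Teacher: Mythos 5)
Your reduction to the common resolution $W$ is fine, and the negativity computations you flag as ``delicate'' in the first paragraph do go through: $E_f=p^{*}p_{*}q^{*}A-q^{*}A$ is $p$-exceptional with $-E_f$ $p$-nef (as $q^{*}A$ is nef), so $E_f\geq0$ by the Negativity Lemma, and every $p$-exceptional prime is $q$-exceptional because $f^{-1}$ contracts no divisors. The genuine gap is in the final paragraph, where you claim that since $h'$ is defined by sections pulled back from $Z$, the indeterminacy locus of $h'\circ r$ equals $r^{-1}(\text{indet}\,h')$. This is false: the pullback $r^{*}|V'|$ can acquire an $r$-exceptional fixed component that is not of the form $r^{*}(\mathrm{Fix}\,|V'|)$, and discarding it can kill base points, so $\text{indet}(h'\circ r)$ can be strictly smaller than $r^{-1}(\text{indet}\,h')$. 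A standard example: the pencil of lines through $z_{0}\in\PP^{2}$ has indeterminacy at $z_{0}$, yet its pullback to $\Bl_{z_{0}}\PP^{2}$, after removing the exceptional fixed divisor, is base point free and defines the ruling $\Bl_{z_{0}}\PP^{2}\to\PP^{1}$, which is a morphism. So the fact that $q=h'\circ r$ is a morphism does not yield that $h'$ is one. Notice also that up to this point your argument has not used the $q$-exceptionality of $F'$ in any essential way, and without it the statement is false --- the blow-up example above, with $Y=\PP^{1}$, is exactly such a failure.

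The missing ingredient is a second pair of Negativity Lemma applications on $W$. From $q^{*}A+F'=r^{*}B+G'$ one gets $F'-G'=r^{*}B-q^{*}A$, whose negative is $r$-nef, while $r_{*}(F'-G')=r_{*}F'\geq0$, so $F'\geq G'$; symmetrically $G'-F'=q^{*}A-r^{*}B$ has $q$-nef negative and $q_{*}(G'-F')=q_{*}G'\geq0$, so $G'\geq F'$. Hence $F'=G'$ and $q^{*}A=r^{*}B$. This closes the proof at once: for any curve $C$ contracted by $r$ one has $q^{*}A\cdot C=r^{*}B\cdot C=0$, so $q$ contracts every fiber of $r$ and the rigidity lemma produces a genuine morphism $h\colon Z\to Y$ with $q=h\circ r$; equivalently, $r^{*}|mB|=|mq^{*}A|$ is base point free, hence $|mB|$ is, and $h'=\Phi_{|mB|}$ is a morphism. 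Incidentally, your middle paragraph also understates what you have: multiplication by $s_{mF'}$ gives an \emph{isomorphism} $H^{0}(W,mq^{*}A)\xrightarrow{\;\sim\;}H^{0}(W,mr^{*}B+mG')$, since both sides equal $H^{0}(Y,mA)$ by the projection formula and $F'$ being $q$-exceptional; so $V'$ is all of $H^{0}(Z,mB)$ and $h'=\Phi_{|mB|}$, which makes the need to prove base point freeness of $|mB|$ --- that is, to prove $F'=G'$ --- unavoidable.
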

\begin{proof}
  By passing to a common resolution, we can assume that $f$ and $g$
  are morphisms. Then by the Negativity Lemma we have $E=F$ and
  $f^*A=g^*B$, so $f$ contracts all the curves that are contracted by
  $g$. Now the result follows from the Rigidity Lemma \cite[Lemma
  1.15]{Deb01}.
\end{proof}

The following is an easy consequence of the property of separatedness
of schemes; see for instance \cite[Ex.~II.4.2]{Har77}.

\begin{lem}\label{lem:iso}
  Let $X$ be a reduced scheme, $Y$ a separated scheme, and let
  $f$ and $g$ be two morphisms from $X$ to $Y$.  Assume that
  $f_{|U}=g_{|U}$ on a Zariski dense open subset $U\subseteq X$. Then
  $f=g$.\qed
\end{lem}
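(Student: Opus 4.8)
The plan is to run the standard diagonal argument and then invoke reducedness of $X$. Form the fibre product $Y\times_\Z Y$ and the morphism $h=(f,g)\colon X\to Y\times_\Z Y$. Since $Y$ is separated, the diagonal $\Delta\colon Y\to Y\times_\Z Y$ is a closed immersion, so its image $\Delta_Y$ is a closed subscheme of $Y\times_\Z Y$; pulling back along $h$, the fibre product $Z:=X\times_{Y\times_\Z Y}\Delta_Y$ is a closed subscheme of $X$, supported set-theoretically on the locus where $f$ and $g$ agree.

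Next I would check that $U$ is contained in $Z$ as a \emph{subscheme}, not merely as a subset. The hypothesis $f_{|U}=g_{|U}$ says precisely that $h_{|U}=\Delta\circ f_{|U}$ factors through $\Delta_Y$, so the universal property of the fibre product supplies a morphism $U\to Z$ compatible with the open immersion $U\hookrightarrow X$. On underlying spaces this gives $|U|\subseteq|Z|\subseteq|X|$ with $|Z|$ closed and $|U|$ dense, hence $|Z|=|X|$.

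Finally, $Z\hookrightarrow X$ is a closed immersion whose underlying map is a homeomorphism onto $|X|$; since $X$ is reduced this forces $Z=X$ (affine-locally $X=\Spec A$ and $Z=\Spec A/I$ with $V(I)=\Spec A$, so $I\subseteq\sqrt{0}=0$). Therefore $h$ itself factors through $\Delta_Y$, which is exactly the assertion $f=g$.

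There is no genuine obstacle here: the lemma is a formal consequence of the two facts that separatedness of $Y$ makes the diagonal a closed immersion and that a reduced scheme admits no proper closed subscheme with full support. The only point needing a moment's care is the scheme-theoretic containment $U\subseteq Z$ in the second step — it is what lets the conclusion be drawn from reducedness rather than from a mere comparison of topological spaces — and this is immediate from the universal property of the fibre product once $f_{|U}=g_{|U}$ is read as a factorization through the diagonal.
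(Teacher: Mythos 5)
Your argument is correct and is precisely the standard diagonal argument that the paper delegates to \cite[Ex.~II.4.2]{Har77}: form $h=(f,g)$, pull back the diagonal (a closed immersion by separatedness) to get a closed subscheme $Z\subseteq X$ containing the dense open $U$ scheme-theoretically, and use reducedness of $X$ to conclude $Z=X$. Nothing to add.
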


\section{Termination with scaling}

\begin{dfn}
  Let $(X,\Delta)$ be a projective klt pair and $A$ a big $\Q$-divisor
  on $X$ such that $K_X+\Delta+A$ is nef. The {\em nef threshold} of
  $(X,\Delta)$ with respect to $A$ is
\[
\lambda=\lambda(X,\Delta,A)=\inf \{t\in\R_+ \mid K_X+\Delta+tA \text{ is
  nef}\,\}.
\]
\end{dfn}

The following lemma is a central ingredient in the Minimal Model
Program with scaling, to be discussed shortly.

\begin{lem}\label{lem:nullflip}
  Let $(X,\Delta)$ be a $\Q$-factorial projective klt pair such that
  $K_X+\Delta$ is not nef, and let $A$ be a big $\Q$-divisor on $X$
  such that $(X,\Delta+A)$ is klt and $K_X+\Delta+A$ is nef. Let
  $\lambda=\lambda(X,\Delta,A)$ be the nef threshold. Then
  $\lambda\in\Q_+$, and there exists an extremal ray $R \subset
  \NEbar (X)$ with $(K_X+\Delta+\lambda A)\cdot R=0$ and
  $(K_X+\Delta)\cdot R<0$.
\end{lem}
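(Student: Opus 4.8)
The plan is to derive both assertions from the finite generation results of Section~\ref{sec:two-coroll-finite} together with Theorem~\ref{thm:cone}. First I would fix an ample $\Q$-divisor $A_0$ with $A_0\leq A$ such that $(X,\Delta+A-A_0)$ is still klt and $A-A_0$ is big; replacing $A$ by $A_0$ and $\Delta$ by $\Delta+(A-A_0)$ changes nothing essential, so I may as well assume $A$ itself is ample. Now consider the segment of divisors $K_X+\Delta+tA$ for $t\in[0,1]$: at $t=1$ it is nef (indeed, after the reduction, a small perturbation of an ample divisor, hence we can even assume ample), and at $t=0$ it is not nef by hypothesis. Since the nef cone is closed and convex, the set of $t$ with $K_X+\Delta+tA$ nef is a closed subinterval $[\lambda,1]$ of $[0,1]$, and $\lambda>0$ because $K_X+\Delta$ is not nef; this already sets up the nef threshold as a well-defined positive real number.

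Next I would prove $\lambda\in\Q_+$. Apply Theorem~\ref{thmA}(1) to the two divisors $K_X+\Delta$ (well, $K_X+\Delta'$ where $\Delta'$ absorbs part of $A$ so that the pair is klt without an extra ample summand — more simply, work with $K_X+\Delta$ and $K_X+\Delta+A$ directly, noting $(X,\Delta)$ and $(X,\Delta+A)$ are klt) to get that the adjoint ring $R=R(X;K_X+\Delta,K_X+\Delta+A)$ is finitely generated. Its support contains the big divisor $K_X+\Delta+A$, so by Theorem~\ref{thm:2}(1),(2) $\Supp R$ is the full rational polyhedral cone $\R_+(K_X+\Delta)+\R_+(K_X+\Delta+A)$. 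The class map $\varphi\colon\Supp R\to N^1(X)_\R$ pushes the coarsest subdivision of Theorem~\ref{thm:2}(3) forward; the locus where all $\aord_\Gamma$ vanish is, by Lemma~\ref{lem:ords}, exactly the semiample (hence, in this adjoint setting, the nef) locus, and it is one of the finitely many rational polyhedral pieces $\mcal{C}_i$. The boundary ray of that piece corresponding to the transition from nef to non-nef meets the segment $\{K_X+\Delta+tA\}$ precisely at $t=\lambda$, and since that ray is rational, $\lambda$ is rational.

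It remains to produce the extremal ray $R\subset\NEbar(X)$ with $(K_X+\Delta+\lambda A)\cdot R=0$ and $(K_X+\Delta)\cdot R<0$. Set $L=K_X+\Delta+\lambda A$; it is nef but lies on the boundary of $\Nef(X)$. I would invoke Theorem~\ref{thm:cone}: the class $\vb_0$ of $K_X+\Delta$ is not nef, and $L$ lies on the visible boundary $V$ of $\Nef(X)$ from $\vb_0$ — indeed $L\in[\vb_0,\text{class of }K_X+\Delta+A]$ and, by minimality of $\lambda$, the open segment from $L$ towards $\vb_0$ leaves $\Nef(X)$, so $[\vb_0,L]\cap\Nef(X)=\{L\}$. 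By perturbing $A$ slightly (using the rationality of $\lambda$ and Remark~\ref{rem:7}) I can arrange that $L$ lies in the relative interior of a face $F$ of $\Nef(X)$, and after a further perturbation that $F$ is a facet, i.e.\ $L\in\relint V$ and the dual face of $\NEbar(X)$ is an extremal ray $R$. Then Theorem~\ref{thm:cone}(2) gives that $L$ is semiample, its associated contraction $f_L\colon X\to Y$ contracts exactly the curves in $R$, so $L\cdot R=0$ by construction; and $(K_X+\Delta)\cdot R=(L-\lambda A)\cdot R=-\lambda A\cdot R<0$ because $A$ is ample and $\lambda>0$ and $R$ is generated by a genuine curve class. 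The main obstacle is the perturbation argument: one must be careful that shrinking $A$ and tilting it to make $L$ land in the interior of a facet can be done while keeping $(X,\Delta+A)$ klt and $K_X+\Delta+A$ nef, and while keeping $\lambda$ rational — this is exactly what Remark~\ref{rem:7} and the rational polyhedrality in Theorem~\ref{thm:2} are there to guarantee, so the argument goes through.
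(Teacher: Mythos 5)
Your rationality argument via the two-ray cone $\R_+(K_X+\Delta)+\R_+(K_X+\Delta+A)$ and the coarsest subdivision from Theorem~\ref{thm:2}(3) is essentially sound, and is a valid (if slightly different) route from the one the paper takes; the paper's proof instead picks finitely many big $\Q$-divisors $\Delta_1,\dots,\Delta_r$ near $\Delta+\lambda A$ (made klt and big via Remark~\ref{rem:7}) so that the cone $\mcal C=\sum\R_+(K_X+\Delta_i)$ has \emph{full-dimensional} image in $N^1(X)_\R$ and contains $K_X+\Delta+\lambda A$ in its interior. That full-dimensionality is not cosmetic: it is what makes $\Nef(X)$ visibly \emph{locally rational polyhedral} at the class of $K_X+\Delta+\lambda A$, which simultaneously produces the rationality of $\lambda$ and the codimension-one face of $\Nef(X)$ through $\varphi(K_X+\Delta+\lambda A)$ whose dual extremal ray $R$ does the job. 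Your two-dimensional cone cannot see the local facet structure of $\Nef(X)$, so it gives rationality but not the extremal ray.

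The extremal ray part of your proof has a genuine gap. You say: perturb $A$ so that $L$ lands in the relative interior of a facet of $\Nef(X)$, then dualize. But perturbing $A$ to $\tilde A$ changes the nef threshold to $\tilde\lambda$ and the boundary class to $\tilde L=K_X+\Delta+\tilde\lambda\tilde A$. Whatever facet $F$ and dual extremal ray $R$ you find satisfies $\tilde L\cdot R=0$, not $L\cdot R=0$, so the conclusion does not transfer back to the original $(A,\lambda)$. Moreover the perturbation step itself presupposes that $\Nef(X)$ is locally rational polyhedral near $L$ (otherwise you cannot ``land on a facet''), which is the very thing you are trying to extract; invoking Theorem~\ref{thm:cone}(1) for this requires $L\in\relint V$, which you do not establish and which can fail e.g.\ when $K_X+\Delta+A$ is itself on $\partial\Nef(X)$. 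A clean fix is exactly the paper's device: replace the two-ray ring by an adjoint ring whose support has full-dimensional image in $N^1(X)_\R$ with $L$ interior, conclude by Theorem~\ref{thm:2}(3) and Lemma~\ref{lem:ords} that $\Nef(X)$ agrees with a rational polyhedral cone near $\varphi(L)$, and take the facet through $\varphi(L)$ whose supporting hyperplane strictly separates $\varphi(K_X+\Delta)$; this gives $\lambda\in\Q$ and the ray $R$ with $(K_X+\Delta)\cdot R<0$ in one stroke.

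One smaller issue: your opening reduction asks for an ample $A_0$ with $A_0\leq A$ as divisors. A big $\Q$-divisor $A$ need not dominate any ample divisor (e.g.\ if $A$ is irreducible and non-ample), so this does not always exist. What you want instead is a $\Q$-linear equivalence $A\sim_\Q H+E$ with $H$ ample, $E\geq 0$; but then $K_X+\Delta+tA\sim_\Q K_X+\Delta+tH+tE$ carries a $t$-dependent effective part, so you cannot absorb it into a fixed $\Delta$ along the whole segment. The paper sidesteps this by working at the single value $t=\lambda$ and applying Remark~\ref{rem:7} there, rather than trying to make $A$ ample globally.
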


\begin{proof}
  Denote by $\varphi\colon\Div_\R(X)\longrightarrow N^1(X)_\R$ the
  natural projection, and let $\|\cdot\|$ be any norm on $N^1(X)_\R$.
  Pick finitely many big $\Q$-divisors $\Delta_1, \dots, \Delta_r$
  such that:
  \begin{enumerate}
  \item $\|\varphi(\Delta+\lambda A)-\varphi(\Delta_i)\|\ll1$ for all $i$;
  \item writing $\mcal C=\sum_{i=1}^r \R_+(K_X+\Delta_i) \subset \Div_\R(X)$, we have
      $K_X+\Delta+\lambda A\in\Int\mcal C$, and the dimension of the cone
      $\varphi(\mcal C) \subset N^1(X)_\R$ is $\dim N^1 (X)_\R$.
  \end{enumerate}
 As in the proof of Theorem~\ref{thmA}(2), there exist an ample $\R$-divisor $H$ and a $\Q$-divisor $B\geq0$ such that $\Delta+\lambda A\sim_\R H+B$ and the pair $(X,B)$ is klt. Write $H_i=\Delta_i-B$ for every $i$. Then $H_i$ are ample $\Q$-divisors since $\|\varphi(H)-\varphi(H_i)\|\ll1$ by (1), hence there are ample $\Q$-divisors $A'_i\sim_\Q H_i$ such that all pairs $(X,A_i'+B)$ are klt, and obviously $\Delta_i\sim_\Q A'_i+B$.

  Therefore, by Theorem~\ref{thmA}(2) the ring
\[
R=R(X;K_X+\Delta_1, \dots, K_X+\Delta_r)
\]
is finitely generated. By Corollary~\ref{cor:5} the cone $\Supp R \cap
\varphi^{-1}\big(\Nef (X) \big)$ is rational polyhedral, and there is
a rational codimension one face $F\ni \varphi(K_X+\Delta+\lambda A)$
of $\Nef (X)$. This implies $\lambda\in\Q_+$, and we choose
$R\subset\NEbar(X)$ to be the extremal ray dual to $F$.
\end{proof}

\paragraph{\textbf{The Minimal Model Program with scaling}}

Let $(X,\Delta)$ be a $\Q$-factorial projective klt pair, and $A$ a
big $\Q$-divisor on $X$. Assume that $(X,\Delta+A)$ is klt and
$K_X+\Delta+A$ is nef.  The Minimal Model Program with scaling by $A$
is the following version of the Minimal Model Program for
$K_X+\Delta$. Starting with $X_1=X$, $\Delta_1=\Delta$ and $A_1=A$, we
define an inductive sequence of rational maps
\[
(X_1,\Delta_1,\lambda_1A_1) \overset{f_1} \dasharrow \cdots
\overset{f_{i-1}} \dasharrow (X_i, \Delta_i,\lambda_iA_i)
\overset{f_i} \dasharrow (X_{i+1}, \Delta_{i+1},\lambda_{i+1}A_{i+1})
\overset{f_{i+1}} \dasharrow \cdots ,
\]
where $\Delta_i$, $A_i$ are the proper transforms of $\Delta$, $A$ on
$X_i$, $\lambda_i=\lambda (X_i,\Delta_i,A_i)$ is the nef threshold,
and $f_i\colon X_i \dasharrow X_{i+1}$ is the extremal contraction or
flip corresponding to a $(K_{X_i}+\Delta_i)$-extremal ray $R_i$
with $(K_{X_i}+\Delta_i+\lambda_i A_i)\cdot R_i=0$ as in
Lemma~\ref{lem:nullflip}. Note that
$K_{X_i}+\Delta_i+\lambda_{i-1}A_i$ is nef by Remark \ref{rem:2}, thus
the sequence $\lambda_i$ is nonincreasing.

First we prove the existence of flips \cite[Theorem~A]{HM10}.

\begin{thm}
  Let $(X,\Delta)$ be a $\Q$-factorial projective klt pair such that
  $K_X+\Delta$ is not nef, and $R\subset \NEbar (X)$ an extremal ray
  such that $(K_X+\Delta)\cdot R<0$.  Let $f_R\colon X \to Y$ be the
  contraction morphism of $R$ as in Remark~\ref{rem:2}, and assume
  $f_R$ is small.  Then the flip of $f_R$ exists.
\end{thm}

\begin{proof}
  It is well-known \cite[Corollary~6.4(2)]{KM98} that the existence of
  the flip is equivalent to the finite generation of the relative
  adjoint ring
\[
R(X/Y,K_X+\Delta)=\bigoplus_{n\geq 0}(f_R)_* \OO_X \bigl(n(K_X+\Delta)\bigr),
\]
in which case $\Proj_Y R(X/Y,K_X+\Delta)$ is the flip.
We may assume that $Y$ is affine, and the theorem
is equivalent to the finite generation of the canonical ring
$R(X,K_X+\Delta)$. The flipping contraction is birational so we can
apply Theorem~\ref{thmA} in the relative over $Y$ case.
\end{proof}

\begin{rem}\label{rem:6}
  Let $(X,\Delta)$ be a projective klt pair, and $f\colon
  X\dashrightarrow Y$ a composite of $(K_X+\Delta$)-divisorial
  contractions and $(K_X+\Delta)$-flips. Then by \cite[Lemma
  3.38]{KM98}, for every resolution $(p,q)\colon W\longrightarrow X\times
  Y$ of $f$ we have
\[
p^*(K_X+\Delta)=q^*(K_Y+f_*\Delta)+E,
\]
where $E> 0$ is a $q$-exceptional divisor.  Therefore $f$ cannot be an
isomorphism, and the above formula implies
\[
H^0(X,K_X+\Delta)\simeq H^0(Y,K_Y+f_*\Delta).
\]
\end{rem}

Now we can establish termination of flips with scaling with big
boundary \cite[Corollary~1.4.2]{BCHM}.

\begin{thm}\label{thm:scalingbig}
  Let $(X_1,\Delta_1)$ be a projective $\Q$-factorial klt pair with
  $\Delta_1$ big.  Let $A_1$ be a big $\Q$-divisor on $X$ such that
  $(X_1,\Delta_1+A_1)$ is klt and $K_{X_1}+\Delta_1+A_1$ is nef, and
  let $\lambda_1=\lambda(X_1,\Delta_1,A_1)$.  Then any sequence
\[
(X_1,\Delta_1,\lambda_1A_1) \overset{f_1} \dasharrow \cdots
\overset{f_{i-1}} \dasharrow (X_i, \Delta_i,\lambda_iA_i)
\overset{f_i} \dasharrow (X_{i+1}, \Delta_{i+1},\lambda_{i+1}A_{i+1})
\overset{f_{i+1}} \dasharrow \cdots
\]
 of flips of the Minimal Model Program with scaling of $A_1$ terminates.
\end{thm}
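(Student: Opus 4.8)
The plan is to argue by contradiction, exploiting the finite generation provided by Theorem~\ref{thmA}(2) together with the linearity of asymptotic order functions from Theorem~\ref{thm:2}(3). Suppose an infinite sequence as in the statement exists. The first observation is that after finitely many steps the sequence consists only of flips: each divisorial contraction drops the Picard number, so only finitely many can occur, and we may assume from the start that every $f_i$ is a flip, hence an isomorphism in codimension one. Since all the $(X_i,\Delta_i)$ are klt pairs whose boundaries remain big along the sequence (bigness is preserved under birational contractions), we want to apply the machinery to a single finite-dimensional space. To do this, I would fix, as in the proof of Lemma~\ref{lem:nullflip}, finitely many big $\Q$-divisors $\Delta^{(1)},\dots,\Delta^{(r)}$ on $X_1$ whose classes are close to $\varphi(\Delta_1+\lambda_1 A_1)$ and which span a full-dimensional cone $\mcal C\subset\Div_\R(X_1)$ containing $K_{X_1}+\Delta_1+\lambda_1 A_1$ in its interior; by Theorem~\ref{thmA}(2) and Remark~\ref{rem:7} the adjoint ring $R=R(X_1;K_{X_1}+\Delta^{(1)},\dots,K_{X_1}+\Delta^{(r)})$ is finitely generated.

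The heart of the argument is a Zariski-decomposition / invariance bookkeeping. Using Lemma~\ref{lem:1}, the asymptotic order functions $\aord_\Gamma$ transform in a controlled (in fact affine-linear up to the multiplicity term) way under each isomorphism in codimension one $f_i$, so the coarsest polyhedral subdivision $\Supp R=\bigcup\mcal L_k$ of Theorem~\ref{thm:2}(3), pulled over to each $X_i$ via $(f_{i-1}\circ\cdots\circ f_1)_*$, still has the property that all $\aord_\Gamma$ are linear on each piece. The key point is that the divisor $K_{X_i}+\Delta_i+\lambda_{i-1}A_i$ is nef (Remark~\ref{rem:2}) and that $K_{X_i}+\Delta_i+\lambda_i A_i$ lies on a codimension-one rational face of $\Nef(X_i)$ by Lemma~\ref{lem:nullflip}; since the $\lambda_i$ are non-increasing and rational, they stabilise, say $\lambda_i=\lambda$ for $i\gg0$. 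From the stabilised value onward, the classes $\varphi(K_{X_i}+\Delta_i+\lambda A_i)$ all lie in the interior of one fixed top-dimensional chamber $\mcal L_k$ of the subdivision (the ``nef chamber'' where all $\aord_\Gamma$ vanish, as in the proof of Theorem~\ref{thm:cone}), while the flip $f_i$ is precisely the rational map determined, via Lemma~\ref{lem:HK} and Lemma~\ref{lem:iso}, by the semiampleness of $K_{X_i}+\Delta_i+\lambda A_i$ together with the ample model on the other side.

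Concretely, for $i$ large the semiample divisor $D_i=K_{X_i}+\Delta_i+\lambda A_i$ defines a contraction $X_i\to Z$, and the same divisor $D_{i+1}$ on $X_{i+1}$ defines a contraction $X_{i+1}\to Z$ to the \emph{same} $Z$, with $f_i$ the induced birational map; because both $X_i$ and $X_{i+1}$ are $\Q$-factorial and isomorphic in codimension one over $Z$, and the relevant linear systems $|\D(\n)|$ have base loci governed by the fixed chamber structure (Theorem~\ref{thm:2}(4)), one gets $f_i=\id$ for all such $i$, a contradiction since flips are never isomorphisms. The main obstacle is the last step: making precise that once the asymptotic order functions and the nef threshold have stabilised, the flips must also stabilise. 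This requires carefully tracking that the section rings $H^0(X_i,\D(\n))$ and their base ideals are genuinely identified under $(f_i)_*$ (using Lemma~\ref{rem:5}(3) and Lemma~\ref{lem:1}), so that the ample model is literally the same scheme $Z$ at every stage, and then invoking Lemma~\ref{lem:HK} and Lemma~\ref{lem:iso} to conclude $f_i$ is an isomorphism — contradicting that a flip is small and not an isomorphism in codimension one on the exceptional locus.
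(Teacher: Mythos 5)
Your proposal uses the same toolkit as the paper (Theorem~\ref{thmA}, Theorem~\ref{thm:2}(3), Lemma~\ref{lem:1}, Lemma~\ref{lem:HK}, Lemma~\ref{lem:iso}), but there is a genuine gap at the pivotal step. You claim that ``since the $\lambda_i$ are non-increasing and rational, they stabilise.'' This is not a valid inference: a non-increasing sequence of positive rationals (e.g.\ $1/n$) need not stabilise. One \emph{could} try to argue that the $\lambda_i$ take only finitely many values because the classes $\varphi(K_{X_i}+\Delta_i+\lambda_iA_i)$ sit on walls of a fixed finite chamber decomposition pulled back across the sequence (this is essentially what the paper does in a \emph{different} theorem, the one about Conjecture~\ref{con:fg}), but you do not make that argument, and without it the stabilisation is unproved. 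The paper's proof of Theorem~\ref{thm:scalingbig} avoids the $\lambda_i$ entirely: it shows that for each $i$ there is a chamber $\mcal C_k^1$ of the finite polyhedral decomposition of $\Supp R_1$ with $\varphi(\mcal C_k^1)\subset(f_{i-1}\circ\dots\circ f_1)^*\Nef(X_i)$, and then applies the pigeonhole principle to the finitely many chambers to find indices $p<q$ with $(f_{p-1}\circ\dots\circ f_1)^*\Nef(X_p)$ and $(f_{q-1}\circ\dots\circ f_1)^*\Nef(X_q)$ sharing an interior point; Lemma~\ref{lem:HK} plus Lemma~\ref{lem:iso} then force $X_p\dashrightarrow X_q$ to be an isomorphism, contradicting Remark~\ref{rem:6}.

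Two secondary issues compound the gap. First, you assert that after stabilisation the classes $\varphi(K_{X_i}+\Delta_i+\lambda A_i)$ lie ``in the interior of one fixed top-dimensional chamber''; but by the very definition of the nef threshold, $K_{X_i}+\Delta_i+\lambda_iA_i$ lies on the boundary of $\Nef(X_i)$, hence on a wall, not in any interior. Second, the conclusion you aim for, ``$f_i=\id$ for all such $i$'', is too strong and in fact mis-aimed: the contradiction one wants is that the \emph{composite} $X_p\dashrightarrow X_q$ is an isomorphism for two distinct stages, which is impossible by Remark~\ref{rem:6} because a nontrivial composition of flips strictly increases discrepancies. A single $f_i$ being the identity is ruled out trivially (a flip is never an isomorphism), so if your argument produced that it would collapse, but as written the argument does not actually establish either version of the contradiction.
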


\begin{proof}
  Suppose by contradiction that an infinite sequence of such flips
  exists; then in particular $\lambda_i>0$ for all $i$.  Denote by
  $\varphi\colon\Div_\R(X_1)\longrightarrow N^1(X_1)_\R$ the natural
  projection and let $\|\cdot\|$ be any norm on $N^1(X_1)_\R$.  Choose
  big $\Q$-divisors $H_1, \dots, H_r$ on $X_1$ such that:
  \begin{enumerate}
  \item $\|\varphi(\Delta_1+\lambda_1 A_1)-\varphi(H_j)\|\ll1$ for all $j$;
  \item writing $\mcal
    C^1=\R_+(K_{X_1}+\Delta_1)+\sum_{j=1}^r\R_+(K_{X_1}+H_j)\subset\Div_\R(X_1)$,
    we have $K_{X_1}+\Delta_1+\lambda_1A_1\in\Int\mcal C^1$, and the
    dimension of the cone $\varphi(\mcal C^1)\subset N^1(X_1)_\R$ is
    $\dim N^1(X_1)_\R$.
\end{enumerate}
For each $i$, let $H_j^i$ be the proper transforms of $H_j$ on $X_i$, and write
\[
R_i=R(X_i;K_{X_i}+\Delta_i, K_{X_i}+H_1^i, \dots ,K_{X_i}+H_r^i).
\]
By Lemma~\ref{rem:5}(3) we have $R_i\simeq R_1$ for all $i$, and these
rings are finitely generated as in the beginning of the proof of
Lemma~\ref{lem:nullflip}.

By construction, the cone $\mcal C^1$ contains an open neighbourhood
of the nef divisor $K_{X_1}+\Delta_1+\lambda_1 A_1$, so it contains
ample divisors in its interior, and thus the cone $\varphi(\Supp R_1) \subset
N^1(X_1)_\R$ also has dimension $\dim N^1(X_1)_\R$.  Let $\Supp R_1 =
\bigcup\mcal C_k^1$ be the finite rational polyhedral subdivision as in
Theorem~\ref{thm:2}(3).

Denote by $\mcal{C}_k^i\subset \Div_\R (X_i)$ the proper transform of
$\mcal{C}_k^1$ and by $\mcal{C}^i\subset \Div_\R (X_i)$ the proper
transform of $\mcal{C}^1$. By Lemma~\ref{lem:1}, for every geometric
valuation $\Gamma$ the asymptotic order function $\aord_\Gamma$ is
linear on each $\mcal{C}_k^i$.

By construction, if $0<\lambda\leq \lambda_1$, then $K_{X_1}+\Delta_1 +
\lambda A_1\in \Int\mcal{C}^1$, so $K_{X_i}+\Delta_i+\lambda_iA_i\in\Int
\mcal{C}^i$ for every $i$. Since $K_{X_i}+\Delta_i+\lambda_iA_i$ is
nef, we have $K_{X_i}+\Delta_i+\lambda_iA_i\in \Supp R_i$ by Corollary
\ref{cor:1}. Hence, by Corollary~\ref{cor:5}, for
each $i$ there exists an index $k$ such that the image of
$\mcal{C}_k^i$ in $N^1(X_i)_\R$ is a subset of $\Nef(X_i)$. Therefore
\[
\varphi(\mcal{C}_k^1) \subseteq (f_{i-1}\circ\dots\circ f_1)^*\Nef(X_i).
\]
Since there are finitely many cones $\mcal C_k^1$, there are two
indices $p$ and $q$ such that the cones $(f_{p-1}\circ\dots\circ
f_1)^*\Nef(X_p)$ and $(f_{q-1}\circ\dots\circ f_1)^*\Nef(X_q)$ share a
common interior point. Thus, by Lemma \ref{lem:HK} the map
$X_p\dashrightarrow X_q$ is a morphism. But then, by Lemma
\ref{lem:iso}, it is an isomorphism, which contradicts
Remark~\ref{rem:6}.
\end{proof}

\begin{cor}\label{cor:4}
  Let $(X,\Delta)$ be a projective $\Q$-factorial klt pair where
  $\Delta$ is big, and let $A$ be an ample $\Q$-divisor on $X$ such
  that $(X,\Delta+A)$ is klt and $K_X+\Delta+A$ is nef. Then:
\begin{enumerate}
\item if $K_X+\Delta$ is pseudoeffective, the Minimal Model Program
  with scaling of $A$ terminates with a minimal model, and the
  canonical model of $(X,\Delta)$ exists;
\item if $K_X+\Delta$ is not pseudoeffective, the Minimal Model
  Program with scaling of $A$ terminates with a Mori fibre space.\qed
\end{enumerate}
\end{cor}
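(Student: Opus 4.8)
The plan is to deduce Corollary~\ref{cor:4} from Theorem~\ref{thm:scalingbig} by showing that the Minimal Model Program with scaling of $A$, which a priori might involve infinitely many divisorial contractions, is in fact a finite process. First I would observe that every step $f_i$ in the Minimal Model Program with scaling is either a divisorial contraction or a flip, and that a divisorial contraction drops the Picard number $\rho(X_i)$ by one. Since $\rho$ is a positive integer, only finitely many $f_i$ can be divisorial contractions; in particular, after discarding finitely many steps we are left with an infinite sequence of flips, which contradicts Theorem~\ref{thm:scalingbig}. Hence the Minimal Model Program with scaling of $A$ terminates after finitely many steps, producing a $\Q$-factorial klt pair $(X_N,\Delta_N)$ together with a big $\Q$-divisor $A_N$.

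Next I would analyze the two cases according to whether $K_X+\Delta$ is pseudo-effective. In case (1), pseudo-effectivity is preserved under the steps of the Minimal Model Program (each $f_i$ is $(K_{X_i}+\Delta_i)$-negative, and pushforward preserves pseudo-effectivity), so $K_{X_N}+\Delta_N$ is pseudo-effective. The termination criterion of the Minimal Model Program with scaling forces $K_{X_N}+\Delta_N$ to be nef — if it were not nef, Lemma~\ref{lem:nullflip} would produce yet another extremal ray with positive nef threshold and the process would continue. Thus $(X_N,\Delta_N)$ is a minimal model of $(X,\Delta)$. Since $\Delta_N$ is again big (bigness is preserved by birational contractions), Corollary~\ref{cor:2} applies and shows $K_{X_N}+\Delta_N$ is semiample; the associated contraction $X_N\to Z$, with $K_{X_N}+\Delta_N\sim_\Q$ pullback of an ample divisor, is by definition the canonical model of $(X,\Delta)$.

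In case (2), $K_X+\Delta$ is not pseudo-effective, so the same holds for $K_{X_i}+\Delta_i$ at every stage, and in particular $K_{X_N}+\Delta_N$ can never become nef. Therefore the process can only terminate when $f_N$ is a Mori fibre contraction, that is, when the extremal contraction $f_N\colon X_N\to Z$ associated to the $(K_{X_N}+\Delta_N)$-negative extremal ray $R_N$ from Lemma~\ref{lem:nullflip} has $\dim Z<\dim X_N$; this $(X_N,\Delta_N)\to Z$ is the desired Mori fibre space. I would also note, as a sanity check, that in this case the nef threshold $\lambda_N$ reaches the boundary value in the sense that no further scaling keeps $K_{X_N}+\Delta_N+tA_N$ nef for $t$ below $\lambda_N$ while also $K_{X_N}+\Delta_N+\lambda_N A_N$ lies on the wall dual to a fibre-type ray.

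The only genuinely delicate point is verifying that the Minimal Model Program with scaling is \emph{well-defined} at each step — i.e.\ that after a divisorial contraction or flip the new pair $(X_{i+1},\Delta_{i+1})$ is again $\Q$-factorial klt, that $A_{i+1}$ remains big, and that $K_{X_{i+1}}+\Delta_{i+1}+\lambda_i A_{i+1}$ is nef so that the next nef threshold makes sense — but all of this is standard (see Remark~\ref{rem:2} and Remark~\ref{rem:4}) and the existence of the individual contractions and flips has already been supplied by Theorem~\ref{thm:cone} and Remark~\ref{rem:4}. Given these inputs, the proof is essentially the bookkeeping above: finiteness of divisorial contractions via $\rho$, termination of flips via Theorem~\ref{thm:scalingbig}, and then reading off the minimal model / canonical model / Mori fibre space from the output according to the pseudo-effectivity dichotomy, using Corollary~\ref{cor:1} and Corollary~\ref{cor:2} in the pseudo-effective case.
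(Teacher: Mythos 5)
Your proposal is correct and is essentially the argument the paper leaves implicit by tagging the statement with \qed: divisorial contractions are finite in number because each one drops the Picard number, so any infinite run would eventually consist only of flips, contradicting Theorem~\ref{thm:scalingbig} applied downstream (and you correctly note that $\Delta_i$, $A_i$ stay big and the klt/nef hypotheses are preserved, so the theorem is applicable to the tail). The one step you should make explicit is in case (1): when you say ``if $K_{X_N}+\Delta_N$ were not nef, Lemma~\ref{lem:nullflip} would produce yet another extremal ray \ldots{} and the process would continue,'' the extremal contraction might a priori be of fibre type, in which case the MMP stops with a Mori fibre space rather than continuing. To rule this out you need to observe that pseudo-effectivity plus bigness of $\Delta_i$ yields, via Corollary~\ref{cor:1}, that $K_{X_i}+\Delta_i$ is $\Q$-effective at every stage; an effective divisor restricts effectively to a general fibre of any contraction, whereas $-(K_{X_i}+\Delta_i)$ would be ample on fibres of a Mori fibre contraction, a contradiction. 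With that inserted, your dichotomy (minimal model with canonical model via Corollary~\ref{cor:2} in the pseudo-effective case, Mori fibre space otherwise) is exactly what the paper intends.
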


\begin{cor}\label{cor:3}
  Let $(X,\Delta)$ be a projective $\Q$-factorial klt pair such that
  $K_X+\Delta$ is not pseudoeffective, and $A$ an ample $\Q$-divisor
  on $X$ such that $(X,\Delta+A)$ is klt and $K_X+\Delta+A$ is nef.
  Then the Minimal Model Program with scaling of $A$ terminates with a
  Mori fibre space.
\end{cor}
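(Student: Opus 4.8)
The plan is to reduce Corollary~\ref{cor:3} to the termination result already in hand, namely Corollary~\ref{cor:4}(2), by observing that the hypotheses of Corollary~\ref{cor:3} are a special case: since $A$ is ample, it is in particular big, so the pair $(X,\Delta+A)$ is klt for $A$ chosen small enough (or after replacing $A$ by a small ample multiple, which does not change the Minimal Model Program with scaling up to rescaling the nef thresholds), and then $K_X+\Delta+A$ is ample, hence nef, so the hypotheses needed to run the Minimal Model Program with scaling of $A$ are satisfied. Note also that here $\Delta$ need not be assumed big, in contrast to Corollary~\ref{cor:4}; this is the only genuine point requiring an argument.

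The key steps, in order, are as follows. First I would note that since $K_X+\Delta$ is not pseudo-effective, for $0<\varepsilon\ll 1$ the divisor $K_X+\Delta+\varepsilon A$ is still not pseudo-effective, and $\Delta+\varepsilon A$ is big because $\varepsilon A$ is ample; moreover $(X,\Delta+\varepsilon A)$ is klt since $(X,\Delta)$ is klt and $\varepsilon\ll 1$. Thus the pair $(X,\Delta')$ with $\Delta'=\Delta+\tfrac{\varepsilon}{2}A$ is a $\Q$-factorial projective klt pair with $\Delta'$ big and $K_X+\Delta'$ not pseudo-effective, and $\tfrac{\varepsilon}{2}A$ is an ample $\Q$-divisor on $X$. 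Second, I would check that the Minimal Model Program with scaling of $A$ for $(X,\Delta)$ coincides, step by step, with the Minimal Model Program with scaling of $\tfrac{\varepsilon}{2}A$ for $(X,\Delta')$: indeed $K_X+\Delta+tA=K_X+\Delta'+(t-\tfrac{\varepsilon}{2})A=K_X+\Delta'+(t-\tfrac{\varepsilon}{2})\cdot\tfrac{2}{\varepsilon}\cdot\tfrac{\varepsilon}{2}A$, so the nef thresholds differ only by a constant shift and rescaling, and the extremal rays contracted at each stage — and hence the maps $f_i$ — are literally the same. Third, I would invoke Corollary~\ref{cor:4}(2) applied to $(X,\Delta')$ and the ample divisor $\tfrac{\varepsilon}{2}A$ to conclude that this Minimal Model Program terminates with a Mori fibre space, which is therefore also the outcome of the Minimal Model Program with scaling of $A$ for $(X,\Delta)$.

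The only mild subtlety — and the step I expect to take the most care — is verifying that the two Minimal Model Programs with scaling are literally the same sequence of birational maps, so that termination of one is termination of the other; this requires checking that an extremal ray $R_i$ with $(K_{X_i}+\Delta_i+\lambda_i A_i)\cdot R_i=0$ and $(K_{X_i}+\Delta_i)\cdot R_i<0$ as produced by Lemma~\ref{lem:nullflip} for the first program is exactly an extremal ray $R_i$ with $(K_{X_i}+\Delta_i'+\mu_i\cdot\tfrac{\varepsilon}{2}A_i)\cdot R_i=0$ and $(K_{X_i}+\Delta_i')\cdot R_i<0$ for the second, with $\mu_i$ the correspondingly shifted and rescaled threshold — but since $\Delta_i'-\Delta_i=\tfrac{\varepsilon}{2}A_i$ is a positive multiple of the scaling divisor, both conditions cut out the same ray and the same $f_i$. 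Everything else is a direct appeal to Corollary~\ref{cor:4}(2).
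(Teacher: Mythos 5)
Your proposal is correct and follows essentially the same route as the paper: reduce to Corollary~\ref{cor:4}(2) by perturbing the boundary to a big one while preserving non-pseudo-effectivity of the adjoint divisor, then observe that the two runs of the MMP with scaling contract the same extremal rays. The paper phrases this slightly more cleanly by taking $\Delta+\mu A$ with $0<\mu\ll 1$ while keeping the scaling divisor equal to $A$ (so the nef thresholds merely shift by $\mu$), rather than changing the scaling divisor to $\tfrac{\varepsilon}{2}A$ as you do, but this is only a cosmetic difference.
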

\begin{proof}
  There exists $0<\mu\ll1$ such that $K_X+\Delta+\mu A$ is also not
  pseudoeffective, thus all $(K_X+\Delta)$-extremal contractions are
  $(K_X+\Delta+\mu A)$-extremal contractions.  We conclude by
  Corollary~\ref{cor:4}.
\end{proof}

Finally we prove that Conjecture~\ref{con:fg} implies unconditional
Minimal Model Program with scaling.

\begin{thm}\label{thm:scaling_unconditional}
  Let $(X,\Delta)$ be a projective $\Q$-factorial klt pair, and let
  $A$ be an ample $\Q$-divisor on $X$ such that $(X,\Delta+A)$ is klt
  and $K_X+\Delta+A$ is nef.  If Conjecture~\ref{con:fg} holds, then
  the Minimal Model Program with scaling of $A$ terminates.
\end{thm}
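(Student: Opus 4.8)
The plan is to run the Minimal Model Program with scaling of $A$ and show that it cannot produce an infinite sequence of flips, arguing exactly as in the proof of Theorem~\ref{thm:scalingbig} but using Conjecture~\ref{con:fg} in place of finite generation with big boundary. First I would observe that, as in the previous results, every step of the program is well-defined: by Lemma~\ref{lem:nullflip} the nef thresholds $\lambda_i$ are rational, each $R_i$ is an honest extremal ray, and by Remark~\ref{rem:4} the required contractions and flips exist (since flips here only require finite generation of canonical rings, which is unconditional by Theorem~\ref{thm:3}). So the program runs, and it suffices to rule out an infinite sequence of flips; as usual this forces $\lambda_i>0$ for all $i$ and the $\lambda_i$ non-increasing.

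Next I would set up the ring. Passing to a tail of the sequence we may assume all $\lambda_i$ lie in a small interval, and in particular $K_{X_1}+\Delta_1$ is pseudo-effective (if it were not, the program would terminate with a Mori fibre space by an argument like Corollary~\ref{cor:4}, contradicting infiniteness). Let $\lambda_\infty=\lim\lambda_i\in[0,\lambda_1]$. The key point is that $K_{X_1}+\Delta_1+\lambda_\infty A_1$ is pseudo-effective and $(X_1,\Delta_1+\lambda_\infty A_1)$ is klt; let $A_1'$ be a small ample $\Q$-divisor so that $D_1=K_{X_1}+\Delta_1+\lambda_\infty A_1$ and $D_1+A_1'$ span a cone whose image in $N^1(X_1)_\R$ is full-dimensional and contains ample classes in its interior, with $D_1+A_1'$ still close to $D_1$. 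By Conjecture~\ref{con:fg} the ring $R(X_1;D_1,D_1+A_1')$ is finitely generated; more robustly, one can write $D_1+A_1'\sim_\Q K_{X_1}+\Delta_1'$ for a big klt boundary $\Delta_1'$ as in Remark~\ref{rem:7} and also realize a full-dimensional cone of such divisors, so that the relevant adjoint ring with one "pseudo-effective" generator $K_{X_1}+\Delta_1$ and several big ones is finitely generated by Conjecture~\ref{con:fg} together with Theorem~\ref{thmA}(2). Write $R_1$ for this ring, and $R_i$ for its proper transform along the flips; by Lemma~\ref{rem:5}(3), $R_i\simeq R_1$ for all $i$, and all $R_i$ are finitely generated.

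Then I would repeat the endgame of Theorem~\ref{thm:scalingbig} verbatim. By Theorem~\ref{thm:2}(1),(3) the cone $\Supp R_1$ is rational polyhedral with a coarsest finite subdivision $\Supp R_1=\bigcup\mcal C_k^1$ on which every $\aord_\Gamma$ is linear; by Lemma~\ref{lem:1} the same holds for the proper transforms $\mcal C_k^i\subset\Div_\R(X_i)$. Since $D_i=K_{X_i}+\Delta_i+\lambda_\infty A_i$ is a limit of the nef divisors $K_{X_i}+\Delta_i+\lambda_i A_i$ and lies in the interior of the transformed cone, Corollary~\ref{cor:1} puts it (and nearby nef divisors) in $\Supp R_i$, and as in Theorem~\ref{thm:cone} there is an index $k=k(i)$ with the image of $\mcal C_{k(i)}^i$ contained in $\Nef(X_i)$, hence $\varphi(\mcal C_{k(i)}^1)\subset(f_{i-1}\circ\cdots\circ f_1)^*\Nef(X_i)$. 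Since there are only finitely many cones $\mcal C_k^1$, two of the pullback cones $(f_{p-1}\circ\cdots\circ f_1)^*\Nef(X_p)$ and $(f_{q-1}\circ\cdots\circ f_1)^*\Nef(X_q)$ share an interior point; by Lemma~\ref{lem:HK} the induced map $X_p\dashrightarrow X_q$ is a morphism, by Lemma~\ref{lem:iso} it is an isomorphism, and this contradicts Remark~\ref{rem:6}.

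The main obstacle is the very first setup: Conjecture~\ref{con:fg} as stated only guarantees finite generation of a rank-two ring $R(X;K_X+\Delta,K_X+\Delta+A)$, whereas the argument above wants a full-dimensional rational polyhedral support cone, i.e.\ a ring with several generators clustered near $D_1$. One resolves this by choosing the extra generators to be \emph{big} adjoint divisors $K_{X_1}+H_j$ with $K_{X_1}+H_j\sim_\Q K_{X_1}+\Delta_1+(\text{small ample})$; the subring generated by $K_{X_1}+\Delta_1$ together with the $K_{X_1}+H_j$ decomposes, up to Veronese and Lemma~\ref{lem:2}, into pieces each of which is either an adjoint ring with big boundaries (finitely generated by Theorem~\ref{thmA}(2)) or a two-generator ring of the form in Conjecture~\ref{con:fg}, so finite generation of the whole follows. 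Care is needed to check that the full-dimensionality and interior-ample conditions can be arranged simultaneously with all generators within $\eta$ of $\varphi(D_1)$, but this is routine given Remark~\ref{rem:7}. Everything downstream is identical to the proof of Theorem~\ref{thm:scalingbig}.
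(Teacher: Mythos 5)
There is a genuine gap, and moreover the paper's proof takes a different and more economical route than your proposal.

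The central obstruction you identify at the end is real, but the fix you sketch does not work. You want a multi-generator ring $R(X_1;K_{X_1}+\Delta_1,K_{X_1}+H_1,\dots,K_{X_1}+H_r)$, with $K_{X_1}+\Delta_1$ merely pseudo-effective and each $H_j$ big, whose support cone is full-dimensional in $N^1(X_1)_\R$; you then want to deduce its finite generation by subdividing into pieces handled by Theorem~\ref{thmA}(2) (all boundaries big) or Conjecture~\ref{con:fg} (a two-generator ring $R(X;K_X+\Delta,K_X+\Delta+A)$). But any rational polyhedral subdivision of the $(r+1)$-dimensional cone $\R_+(K_{X_1}+\Delta_1)+\sum\R_+(K_{X_1}+H_j)$ must contain at least one top-dimensional piece whose closure contains the ray $\R_+(K_{X_1}+\Delta_1)$. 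Such a piece has dimension $r+1\geq 3$ and contains non-big adjoint divisors, so it is neither covered by Theorem~\ref{thmA}(2) nor by the two-generator form in Conjecture~\ref{con:fg}. There is no general principle that lets you combine the two statements to get finite generation of a ring whose support cone is higher-dimensional with a non-big edge. A second, smaller issue is that $\lambda_\infty=\lim\lambda_i$ need not be rational, so $K_{X_1}+\Delta_1+\lambda_\infty A_1$ need not be a $\Q$-divisor, taking it outside the scope of the conjecture (whereas the individual $\lambda_i$ are rational by Lemma~\ref{lem:nullflip}).

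The paper's proof sidesteps the full-dimensionality requirement entirely. It keeps the rings two-dimensional: $R_i=R(X_i;K_{X_i}+\Delta_i,K_{X_i}+\Delta_i+\lambda_iA_i)$, which via Remark~\ref{rem:6} is isomorphic to $R(X_1;K_{X_1}+\Delta_1,K_{X_1}+\Delta_1+\lambda_iA_1)$ and hence finitely generated by Conjecture~\ref{con:fg} applied on $X_1$ (crucially, this is where $A_1$ is ample; on $X_i$ the divisor $A_i$ is only big). Fixing $i_0$ with a sequence of flips beyond it, it takes the finite rational polyhedral subdivision of the two-dimensional cone $\Supp R_{i_0}$ from Theorem~\ref{thm:2}(3) and transports it by the flips. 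The key observation is that if $K_{X_i}+\Delta_i+\lambda_iA_i$ lay in the interior of a subdivision cone $\mcal C_k^i$, then (by Corollary~\ref{cor:2} and linearity and non-negativity of the $\aord_\Gamma$) every divisor in $\mcal C_k^i$ would be semiample, hence nef, contradicting that $\lambda_i$ is the nef threshold. So each nef-threshold divisor sits on one of the finitely many boundary rays of the $\mcal C_k^{i_0}$, hence $\{\lambda_i\}$ is a finite set. If $\min\lambda_i>0$, a small shift $\Delta_{i_0}\rightsquigarrow\Delta_{i_0}+\mu A_{i_0}$ with $0<\mu<\min\lambda_i$ makes the boundary big and the same sequence of flips is a sequence for the MMP with scaling of $A_{i_0}$ starting at this new pair, so termination follows from Theorem~\ref{thm:scalingbig}. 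No full-dimensional cone, no appeal to Lemma~\ref{lem:HK} or Lemma~\ref{lem:iso}, is needed. If you want to salvage your approach you would need to first prove, independently, that the relevant mixed (pseudo-effective-plus-big) adjoint ring is finitely generated, which is essentially Conjecture~\ref{con:FGfull} and strictly stronger than the hypothesis you are given.
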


\begin{proof}
  By Corollary \ref{cor:3} we can assume that $K_X+\Delta$ is
  pseudoeffective. Consider the Minimal Model Program with scaling of
  $A_1=A$ starting from the pair $(X_1,\Delta_1)=(X,\Delta)$:
\[
(X_1,\Delta_1,\lambda_1A_1) \overset{f_1} \dasharrow \cdots
\overset{f_{i-1}} \dasharrow (X_i, \Delta_i,\lambda_iA_i)
\overset{f_i} \dasharrow (X_{i+1}, \Delta_{i+1},\lambda_{i+1}A_{i+1})
\overset{f_{i+1}} \dasharrow \cdots ,
\]
and consider the adjoint rings
\[
R_i=R(X_i;K_{X_i}+\Delta_i, K_{X_i}+\Delta_i+\lambda_i A_i).
\]
  By Remark \ref{rem:6} we have
$R_i\simeq R(X_1;K_{X_1}+\Delta_1, K_{X_1}+\Delta_1+\lambda_i A_1)$,
  thus all $R_i$ are finitely generated by Conjecture~\ref{con:fg}. Note that here we invoke Conjecture~\ref{con:fg} on
  $X_1$ and not on $X_i$, since the divisor $A_i$ is, in general, only big and not ample.

  Assume that there exists an index $i_0$ and a sequence of flips $f_i$ for $i\geq i_0$. We will prove that this
  sequence is finite, and thus the program terminates.

  By Theorem~\ref{thm:2}(2) we have
 \[\Supp R_{i_0}=\R_+(K_{X_{i_0}}+\Delta_{i_0})+
  \R_+(K_{X_{i_0}}+\Delta_{i_0}+\lambda_{i_0} A_{i_0}).
\]
Let $\Supp R_{i_0}=\bigcup\mcal C_k^{i_0}$ be a rational polyhedral
subdivision as in Theorem~\ref{thm:2}(3), and let
$\mcal{C}_k^i\subset \Div_\R(X_i)$ denote the proper transform of
$\mcal{C}_k^{i_0}$ for $i\geq i_0$. By Lemma~\ref{lem:1},
for each geometric valuation $\Gamma$ on $k(X_i)$, the function $o_\Gamma$ is linear on
$\mcal C^i_k$.

  Assume that, for some index $k$,
  $K_{X_i}+\Delta_i+\lambda_iA_i\in\Int\mcal C^i_k$. By
  Corollary~\ref{cor:2}, $K_{X_i}+\Delta_i+\lambda_iA_i$ is semiample,
  hence $\aord_\Gamma ( K_{X_i}+\Delta_i+\lambda_iA_i )=0$ for every
  geometric valuation $\Gamma$ of $X_i$.  Since the functions
  $\aord_\Gamma$ are linear and nonnegative on $\mcal C^i_k$, they all must
  then be identically zero on $\mcal{C}^i_k$.  Then, by Lemma~\ref{lem:ords}(2) every divisor
  in $\mcal C_k^i$ is semiample, and thus nef, so it can not be
  true that $\lambda_i$ is \emph{smallest} such that
  $K_{X_i}+\Delta_i+\lambda_iA_i$ is nef, a contradiction.

  Thus, if $K_{X_i}+\Delta_i+\lambda_iA_i$ is in
  $\mcal{C}^i_k$, it is on one of the two boundary rays of
  $\mcal{C}^i_k$, and therefore $K_{X_{i_0}}+\Delta_{i_0}+\lambda_iA_{i_0}$
  is on one of the two boundary rays of
  $\mcal{C}_k^{i_0}$. Since there are finitely many cones $\mcal{C}_k^{i_0}$,
  the set $\{\lambda_i\}$ is finite. Write
  $\lambda=\min\{\lambda_i\}$. If $\lambda=0$, then $K_{X_i}+\Delta_i$
  is nef for $i\gg0$ and the program stops. If $\lambda>0$,
  choose $0<\mu<\lambda$.
  Then every $(K_{X_{i_0}}+\Delta_{i_0})$-flip is a $(K_{X_{i_0}}+\Delta_{i_0}+\mu A_{i_0})$-flip,
  and every sequence of $(K_{X_{i_0}}+\Delta_{i_0}+\mu A_{i_0})$-flips with scaling of $A_{i_0}$
  is finite by Theorem~\ref{thm:scalingbig}.
\end{proof}

\section{Conjecture \ref{con:fg} implies Abundance}
\label{sec:abundance}

\begin{thm}\label{thm:1}
  Conjecture \ref{con:fg} implies log abundance. In other words, if
  $(X,\Delta)$ is a projective klt pair and $K_X+\Delta$ is nef,
  then $K_X+\Delta$ is semiample.
\end{thm}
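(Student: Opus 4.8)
The plan is to reduce to the case of big boundary (Theorem~\ref{thm:scalingbig}) by running a carefully chosen Minimal Model Program. Given a projective klt pair $(X,\Delta)$ with $K_X+\Delta$ nef, first pass to a $\Q$-factorialization, so we may assume $X$ is $\Q$-factorial. Pick an ample $\Q$-divisor $A$ such that $(X,\Delta+A)$ is klt, and consider the adjoint ring $R=R(X;K_X+\Delta,K_X+\Delta+A)$, which is finitely generated by Conjecture~\ref{con:fg} (applied to the pseudo-effective, in fact nef, divisor $K_X+\Delta$). By Theorem~\ref{thm:2}(2) its support is the two-dimensional cone spanned by $K_X+\Delta$ and $K_X+\Delta+A$.

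The key idea is to use the finite rational polyhedral subdivision $\Supp R=\bigcup\mcal C_k$ from Theorem~\ref{thm:2}(3) together with the accompanying finite-index sublattice $\LL$ from Theorem~\ref{thm:2}(4), exactly as in the proof of Theorem~\ref{thm:cone}: the cone $\Supp R$ meets the ample cone (via $K_X+\Delta+\varepsilon A$ for small $\varepsilon$), and on the chamber $\mcal C_k$ containing these ample classes all the functions $\aord_\Gamma$ vanish identically, whence $\mcal C_k$ is exactly the intersection of $\Supp R$ with the preimage of $\Nef(X)$. Since $K_X+\Delta$ is nef, it lies in $\mcal C_k$. If $K_X+\Delta$ lies in the interior of $\mcal C_k$, then all $\aord_\Gamma(K_X+\Delta)=0$ and Lemma~\ref{lem:ords}(2) immediately gives semiampleness. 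Otherwise $K_X+\Delta$ spans the boundary ray of $\mcal C_k$ other than $\R_+(K_X+\Delta+A)$; I need a mechanism to either reach the interior or handle this boundary ray.

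Here is where I would invoke the Minimal Model Program with scaling and Theorem~\ref{thm:scalingbig}. If $K_X+\Delta$ is \emph{not} nef one runs the MMP with scaling of $A$, but here $K_X+\Delta$ already is nef, so instead the plan is: for the boundary case, pick a small $\delta>0$ and observe that $K_X+\Delta+\delta A$ is ample, so $(X,\Delta+\delta A)$ has big boundary; running the MMP with scaling of $A$ for the pair $(X,\Delta)$ is trivial (no flips, since $K_X+\Delta$ is already nef), so that route gives nothing directly. The genuine argument must instead produce, from finite generation, a birational contraction $g\colon X\dashrightarrow X'$ to a $\Q$-factorial pair $(X',\Delta')$ with $g_*(K_X+\Delta)$ still nef and now \emph{in the interior} of the analogous chamber, or with $\Delta'$ big. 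Concretely, run the $(K_X+\Delta)$-MMP with scaling of $A$ starting from $(X,\Delta+A)$: since $K_X+\Delta$ is nef every nef threshold is zero only in the limit, but the pair $(X,\Delta+A)$ has big boundary $\Delta+A$, so Theorem~\ref{thm:scalingbig} applies and the program terminates; the resulting model has $K_{X'}+\Delta'$ nef and $K_{X'}+\Delta'+A'$ no longer movable past the nef cone, forcing $K_{X'}+\Delta'$ into the interior of $\mcal C_k'$. Then Lemma~\ref{lem:ords}(2) gives that $g_*(K_X+\Delta)$ is semiample; finally, semiampleness descends along the birational contraction $g$ via Lemma~\ref{rem:5} and Lemma~\ref{lem:HK}, since $H^0$ is preserved, so $K_X+\Delta$ itself is semiample.

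The main obstacle I anticipate is the last reduction: showing that after running the big-boundary MMP with scaling the divisor $K_{X'}+\Delta'$ genuinely lands in the \emph{interior} of the semiample chamber rather than persisting on its boundary, and then transporting semiampleness back to $X$. The interior claim is really the statement that once no further flips or divisorial contractions are possible for $K_{X'}+\Delta'$, there is still a positive amount of ample $A'$ one can subtract while staying nef — equivalently $K_{X'}+\Delta'$ is not extremal in $\Nef(X')$ relative to the cone $\Supp R'$ — and this needs a genuine argument, perhaps by choosing $A$ generically so that $K_X+\Delta$ and $K_X+\Delta+A$ are not proportional modulo the exceptional behaviour, or by a perturbation argument in the style of Remark~\ref{rem:7}. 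The descent of semiampleness is comparatively routine given Lemma~\ref{rem:5}(3) and Lemma~\ref{lem:HK}, but one must check that the contraction morphism produced on $X'$ pulls back to a morphism on $X$ contracting the same curves.
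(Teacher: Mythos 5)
The direct argument you sketch and then abandon in the middle of your proposal is in fact exactly the paper's proof, and the ``boundary ray'' you worry about is not a genuine obstacle. The point you are missing: Theorem~\ref{thm:2}(3) gives that each $\aord_\Gamma$ is \emph{linear} on each (closed) chamber $\mcal C_k$ of the polyhedral subdivision, and the relative interior of any $\mcal C_k$ consists of ample divisors (every divisor $a(K_X+\Delta)+b(K_X+\Delta+A)$ with $a,b>0$ is a positive multiple of $K_X+\Delta+\tfrac{b}{a+b}A$, which is ample since $K_X+\Delta$ is nef and $A$ is ample). A linear function on a closed cone that vanishes on the relative interior vanishes identically, in particular on the extremal ray $\R_+(K_X+\Delta)$. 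Hence $\aord_\Gamma(K_X+\Delta)=0$ for all $\Gamma$, and Lemma~\ref{lem:ords}(2) applies directly. There is no need to distinguish ``$K_X+\Delta$ in the interior'' from ``$K_X+\Delta$ on the boundary'' --- the paper's proof is literally the proof of Corollary~\ref{cor:2} with Theorem~\ref{thmA} replaced by Conjecture~\ref{con:fg}, word for word.

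Your proposed workaround via MMP with scaling and Theorem~\ref{thm:scalingbig} is therefore superfluous, and as you yourself observe it is also incomplete: running the MMP with scaling of $A$ starting from $(X,\Delta)$ is vacuous since $K_X+\Delta$ is already nef, and the modified program you then describe for $(X,\Delta+A)$ is a $(K_X+\Delta+A)$-MMP rather than a $(K_X+\Delta)$-MMP, so it gives no information about $K_X+\Delta$. The claim that the resulting model forces $K_{X'}+\Delta'$ into the interior of the semiample chamber is unjustified --- in general a nef divisor can stay on the boundary of the nef cone after any birational surgery. You also don't need the $\Q$-factorialization: neither Conjecture~\ref{con:fg} nor the finite generation argument requires it, and adding it only creates the extra burden of transporting semiampleness back. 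The moral is that once the adjoint ring is finitely generated, the polyhedrality of the asymptotic order functions does all the work; no geometry of models is needed.
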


\begin{proof}
The proof is almost verbatim the proof of Corollary \ref{cor:2}. The
only difference is at the beginning of the proof when, instead of
invoking Theorem~\ref{thmA}, we invoke Conjecture~\ref{con:fg}.
\end{proof}

\section{Relative case}
\label{sec:relative}

In this section we briefly sketch how all the results above can be
extended to the case of a projective morphism of normal
quasi-projective varieties $\pi\colon X\longrightarrow S$. In general,
$\pi$ has a Stein factorization $\pi=p\circ \pi^\prime$, where
$\pi^\prime \colon X \longrightarrow S^\prime$ has connected fibres and $p\colon
S^\prime \longrightarrow S$ is finite. Since the Minimal Model Program for the
morphism $\pi$ is the same as the Minimal Model Program for the
morphism $\pi^\prime$, we assume that $\pi$ has connected fibres to
start with.

It is enough to prove relative versions of
Theorem~\ref{thmA}, Theorem~\ref{thm:2}, Lemma~\ref{lem:ords} and
Lemma~\ref{lem:1}, since all other results can be derived from these
by copying verbatim the proofs above in the absolute case.

Relative divisorial rings over $S$ are sheaves of $\OO_S$-algebras of
the form
\[
R=R(X/S;D_1, \dots, D_r)=\bigoplus_{(n_1,\dots, n_r)\in \N^r}
\pi_*\OO_X(\lfloor n_1D_1+\dots + n_rD_r\rfloor),
\]
where $D_1,\dots, D_r$ are $\Q$-Cartier $\Q$-divisors on $X$. The
support $\Supp R$ of this cone is the subcone of $\sum\R_+
D_i\subset\Div_\R(X)$ spanned by $\pi$-effective integral divisors.

First, we remark that the results in \cite{CL10} remain valid if
instead of projective varieties we consider varieties projective over
affine varieties. In particular, we have the following relative
version of Theorem~\ref{thmA}.

\begin{thm}
\label{thmB}
  Let $\pi\colon X\longrightarrow S$ be a projective morphism between normal varieties, and let $\Delta_i$ be $\Q$-divisors on $X$ such that the pairs $(X,\Delta_i)$ are klt for $i=1,\dots,r$.
  \begin{enumerate}
\item If $A$ is a $\pi$-ample $\Q$-divisor on $X$, then the relative adjoint ring
\[
R(X/S; K_X+\Delta_1+A, \dots, K_X+\Delta_r+A)
\]
  is finitely generated.
\item If $\Delta_i$ are $\pi$-big, then the relative adjoint ring
\[
R(X/S;K_X+\Delta_1,\dots,K_X+\Delta_r)
\]
is finitely generated.
\end{enumerate}
\end{thm}

If $D$ is a $\pi$-effective integral
divisor, as in \cite[9.1.17]{Laz04b} the natural homomorphism
$\pi^*\pi_*\OO_X(D)\longrightarrow \OO_X(D)$ determines the {\em base
  ideal of $D$ relative to $\pi$}:
$$\mathfrak b(\pi,|D|)=\im\big(\pi^*\pi_*\OO_X(D)\otimes\OO_X(-D)\longrightarrow \OO_X\big).$$
% For every geometric valuation $\Gamma$ of $X$, let $R_\Gamma$ be the
% corresponding DVR, and let $\mathfrak b_{\Gamma,\pi,|D|}$ be the
% image of $\mathfrak b(\pi,|D|)$ in $R_\Gamma$.
Let $\mu\colon Y\longrightarrow X$ be a log resolution of the ideal
$\mathfrak b(\pi,|D|)$, and let $F$ be an effective divisor on $Y$
such that $\mu^{-1}\mathfrak b(\pi,|D|)=\OO_Y(-F)$. For every
geometric valuation $\Gamma$ of $X$, we set $\mult_\Gamma\mathfrak
b (\pi,|D|)=\mult_\Gamma F$. Then the {\em asymptotic order of
  vanishing of $D$ along $\Gamma$ relative to $S$} is
$$o_{\Gamma/S}(D)=\liminf_{m\rightarrow\infty}\frac1m\mult_\Gamma
\mathfrak b (\pi,|mD|).$$ Another way to define this is by considering
relative linear systems.  As in \cite{BCHM}, for an $\R$-divisor $D$
and for $\mathbf k\in\{\Z,\Q,\R\}$, define the relative linear system
$|D/S|_\mathbf k=\{D'\mid D\sim_{\mathbf k,\pi}D'\geq0\}$. If $D$ is a
$\Q$-divisor we have $|D/S|_\R=|D/S|_\Q$ as in Lemma~\ref{lem:3}, and
if $D$ is integral, then $D$ is $\pi$-effective if and only if
$|D/S|\neq\emptyset$ by \cite[Lemma 3.2.1]{BCHM} and \cite[Lemma
8.3.4]{Kol07}. Furthermore, it follows from the projection formula
that $\mathfrak b(\pi,|D'|)=\mathfrak b(\pi,|D|)$ for $D'\in |D/S|$.
This proves that for every geometric valuation $\Gamma$ of $X$ and for
any $\R$-divisor $D$ on $X$, we can equivalently define the relative
asymptotic order of vanishing along $\Gamma$ as
$$o_{\Gamma/S}(D)=\inf\{\mult_\Gamma D'\mid D'\in|D/S|_\R\}.$$
One can then easily see that the relative analogues of Theorem~\ref{thm:2}, Lemma~\ref{lem:ords} and Lemma~\ref{lem:1} hold by making minor adaptations of the proofs of those results in the absolute case.

\bibliography{biblio}
\pagestyle{plain}
\end{document}